\begin{document}

\title{Eigenvectors of Deformed Wigner Random Matrices}
\author{Farzan ~Haddadi and Arash Amini

\thanks{F.H. is with the School of Electrical Engineering, Iran University
of Science \& Technology, Tehran, Iran (e-mail:
farzanhaddadi@iust.ac.ir). A.A. is with the Electrical Engineering
Department of Sharif University of Technology, Tehran, Iran
(e-mail: aamini@sharif.edu).}}

\maketitle

\theoremstyle{plain}
\newtheorem{theorem}{Theorem}
\newtheorem{lemma}[theorem]{Lemma}
\theoremstyle{remark}
\newtheorem{remark}[theorem]{Remark}

\begin{abstract}
We investigate eigenvectors of rank-one deformations of random
matrices $\boldsymbol B = \boldsymbol A + \theta \boldsymbol
{uu}^*$ in which $\boldsymbol A \in \mathbb R^{N \times N}$ is a
Wigner real symmetric random matrix, $\theta \in \mathbb R^+$, and
$\boldsymbol u$ is uniformly distributed on the unit sphere. It is
well known that for $\theta > 1$ the eigenvector associated with
the largest eigenvalue of $\boldsymbol B$ closely estimates
$\boldsymbol u$ asymptotically, while for $\theta < 1$ the
eigenvectors of $\boldsymbol B$ are uninformative about
$\boldsymbol u$. We examine $\mathcal O(\frac{1}{N})$ correlation
of eigenvectors with $\boldsymbol u$ before phase transition and
show that eigenvectors with larger eigenvalue exhibit stronger
alignment with deforming vector through an explicit inverse law.
This distribution function will be shown to be the ordinary
generating function of Chebyshev polynomials of second kind. These
polynomials form an orthogonal set with respect to the semicircle
weighting function. This law is an increasing function in the
support of semicircle law for eigenvalues $(-2\: ,+2)$. Therefore,
most of energy of the unknown deforming vector is concentrated in
a $cN$-dimensional ($c<1$) known subspace of $\boldsymbol B$. We
use a combinatorial approach to prove the result.
\end{abstract}

\begin{IEEEkeywords}
Random matrix, Wigner matrix, eigenvector, rank-one deformation,
phase transition, Catalan number, Chebychev polynomial.
\end{IEEEkeywords}

\section{Introduction}

\IEEEPARstart{L}{et} $\boldsymbol A_{N \! \times \! N}$ be a
random matrix deformed by a low-rank matrix $\boldsymbol P$ to
give $\boldsymbol B = \boldsymbol A + \boldsymbol P$. In this
scenario, $\boldsymbol B$ can be interpreted as the observations
of a structured pure signal $\boldsymbol P$ contaminated by
maximally unstructured noise term $\boldsymbol A$. The main
question is whether reliable information about the signal can be
extracted from noisy observations? We are usually interested in
either an inference on presence of the signal or an estimate of
the signal component \cite{Hogg}. Inference problem on the
presence of an unknown signal entails examining the eigenvalues of
the observation matrix $\boldsymbol B$, specially the largest of
them in magnitude. Therefore, much effort is devoted to
investigating distribution and behavior of eigenvalues of random
matrices. This has been done both in the null hypothesis of a
single random matrix \cite{Wigner, Marcenko, Girko, Tracy}, and in
the alternative hypothesis of a deformed random matrix
\cite{Feral, Benaych}. In contrast, the estimation problem
involves the eigenvectors associated with the largest eigenvalues
of the observation matrix.

$\boldsymbol A_{N \! \times \! N}$ is called a Wigner random
matrix if it is symmetric real with elements $A_{ij}$ independent
random variables for $i\leqslant j$ with zero mean, $\mathbb E
A_{ij}^2 = \frac{1}{N}$, and uniformly bounded higher moments
\cite{Anderson}. Wigner \cite{Wigner}, showed that the eigenvalues
of such a random matrix converge to a bulk with semi-circle law on
the support of $(-2\: ,+2)$. Marcenko and Pastur followed a
similar approach in \cite{Marcenko}, to calculate the distribution
of singular values of a rectangular random matrix. When the
symmetry assumption is relaxed, the complex eigenvalues exhibit a
circular distribution which was observed and sketch-proved by
Girko in \cite{Girko}. Statistical distribution of the largest
eigenvalue of $\boldsymbol A$ is of high importance in inference
and other applications. This distribution was characterized by
Tracy and Widom in \cite{Tracy}.

In an inference scenario, a signal part may be present in the
observations. Signal is a highly structured matrix in the form of
a rank-one unit Ferobenius norm matrix. The observation model will
be  $\boldsymbol B = \boldsymbol A + \theta \boldsymbol {uu}^*$ in
which $\theta \geqslant 0$ is the signal amplitude and $\lVert
\boldsymbol u \rVert _2=1$. It is well-known that if $\theta < 1$,
addition of the signal makes no asymptotic change in the limiting
distribution of the eigenvalues. In case $\theta > 1$, a phase
transition occurs meaning that the largest eigenvalue separates
significantly from the bulk of the spectrum and moves from $+2$ to
essentially $\theta + \frac{1} {\theta}$. This has been shown for
the first time in the context of nonzero mean Wigner matrices in
\cite{Furedi}, then in Gaussian ensembles in \cite{Peche}, and
finally as a universal result with relaxed assumptions on the
random matrix in \cite{Feral}.

The estimation problem is associated with the eigenvectors of the
observation matrix. In the null case when signal is not present,
the eigenvectors of Gaussian random matrix are Haar distributed on
the orthogonal group $\mathbb O(N)$ \cite{Anderson}. When unitary
invariance of a Gaussian distribution is not present, a similar
result \cite{Erdos}, shows that the eigenvectors of a Wigner
random matrix are delocalized in the sense that their $\ell_p$
norms for $p \geqslant 2$ are $\mathcal O(N^{ \frac{1}{p} -
\frac{1}{2}})$. In the deformed case of $\theta > 0$, it is shown
in \cite{Lee} that above a certain threshold for signal
eigenvalues, the observation matrix eigenvectors are partially
localized in the coordinate system defined by signal eigenvectors.
Assumptions on the signal in \cite{Lee} is rather restrictive. A
more general approach in this area is \cite{Benaych} in which
authors demonstrate phase transition both for eigenvalues and
eigenvectors of deformed general random matrix.

For a fixed rank signal, there is a threshold on the eigenvalues
of the signal, above which the corresponding observation matrix
eigenvalue moves out of the bulk in a position predictable by
$\theta$ and Stieltjes transform of the bulk distribution.
Eigenvectors are shown to possess a good alignment with the signal
after phase transition. Assume a rank-one model $\boldsymbol B =
\boldsymbol A + \theta \boldsymbol {uu}^*$ and denote eigenvalue
decomposition of $\boldsymbol A$ and $\boldsymbol B$ as:
\begin{equation}
\label{Aevd} \boldsymbol A = \sum_{i=1}^N  d_i \boldsymbol
\omega_i \boldsymbol \omega_i^*
\end{equation}
\begin{equation}
\label{Bevd} \boldsymbol B = \sum_{i=1}^N  \lambda_i \boldsymbol
v_i \boldsymbol v_i^*
\end{equation}
in which eigenvalues are sorted $d_1 \geqslant d_2 \geqslant
\cdots \geqslant d_N$ and $\lambda_1 \geqslant \lambda_2 \geqslant
\cdots \geqslant \lambda_N$. Assume that $\boldsymbol A$ is a
normalized Wigner real random matrix. Then for $\theta > 1$ the
largest eigenvalue of observation converges to $\lambda_1
\overset{\text{a.s.}} {\longrightarrow} \theta + \frac{1}{\theta}$
\cite{Benaych, Feral} and the associated eigenvector lies
asymptotically on a cone around $\boldsymbol u$ defined by $
\langle \boldsymbol v_1 , \boldsymbol u \rangle  ^2
\xrightarrow{\text{a.s.}} 1 - \frac{1}{\theta^2}$ \cite{Benaych}.
Other eigenvectors are uninformative about $\boldsymbol u$ and
therefore $ \langle \boldsymbol v_i , \boldsymbol u \rangle  ^2
\xrightarrow{\text {a.s.}} 0$.

Before phase transition when $\theta < 1$ every eigenvalue is in
the bulk and the eigenvectors are Haar-distributed on $\mathbb
O(N)$ and therefore $ \langle \boldsymbol v_i , \boldsymbol u
\rangle ^2 \xrightarrow{\text{a.s.}} 0 \; : \; \forall i \leqslant
N$ \cite{Benaych}. In fact, this inner product should sum to one
and therefore it is $\mathcal O \big( \frac{1}{N} \big)$. From a
perturbation perspective, adding the signal part increases the
``energy" of the random matrix in direction $\boldsymbol u$.
Therefore, eigenvectors associated with the largest eigenvalues
should slightly rotate to interpolate between $\boldsymbol A$
powerful directions and $\boldsymbol u$. This seems to result in a
non-uniform distribution of $\boldsymbol u$ energy in subspaces
spanned by each $\boldsymbol v_i$, i.e. $\mathbb E \, | \langle
\boldsymbol v_i , \boldsymbol u \rangle |^2$. Gradually, the first
eigenvectors incorporate a good portion of the energy of
$\boldsymbol u$ and the remaining eigenvectors compete for less.
Therefore most of the energy of $\boldsymbol u$ should be confined
in a subspace spanned by eigenvectors with larger eigenvalues. In
another view, adding energy in direction $\boldsymbol u$,
increases the chance of nearby directions to win to be the
eigenvectors of the largest eigenvalues. Therefore, larger
eigenvalues exhibit better alignment with the signal on average.
Fig. \ref{fig:sample} shows a sample of inner products when $N =
200$ and $\theta = 0.7$.

\begin{figure}
\centering
\includegraphics[width = 0.45 \textwidth]{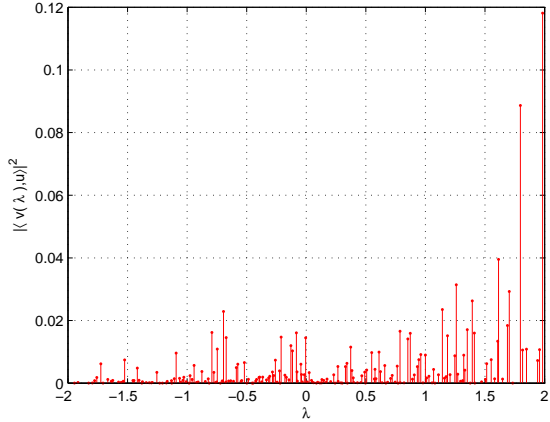}
\caption{A sample of inner products $ \langle \boldsymbol
v(\lambda) , \boldsymbol u \rangle ^2$ versus $\lambda$ in which
$N=200$ and $\theta=0.7$. Eigenvectors associated with larger
eigenvalues are  on average better aligned with $\boldsymbol u$.}
\label{fig:sample}
\end{figure}

\begin{figure}
\centering
\includegraphics[width = 0.45 \textwidth]{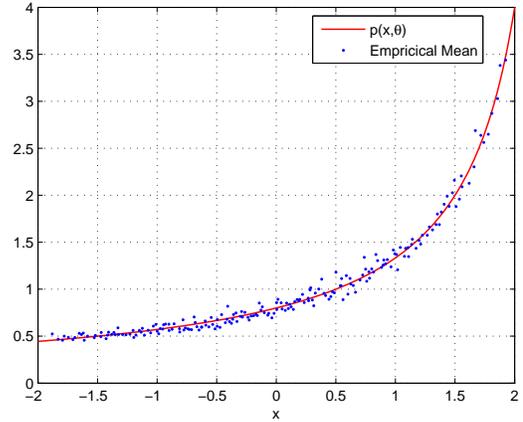}
\caption{Empirical mean of $N  \langle \boldsymbol v(x) ,
\boldsymbol u \rangle ^2$ and the predicted function $p(x;\theta)$
match well. Simulation parameters are $N=200$ and $\theta=0.5$
while 500 Monte Carlo iterations are used to calculate the
empirical mean. } \label{fig:p(x,theta)}
\end{figure}

Nothing is deterministic in Fig. \ref{fig:sample} and therefore we
are interested in the expected value of inner products. Fig.
\ref{fig:p(x,theta)} shows the empirical means of $ \langle
\boldsymbol v(x) , \boldsymbol u \rangle ^2$ in 500 Monte Carlo
iterations with $N=200$ and $\theta=0.5$. In this paper, the law
of distribution $\lim_{N\to+\infty} N \, \mathbb E \, \langle
\boldsymbol v(x),\boldsymbol u \rangle ^2$ is calculated to be:
\begin{equation}
\label{p(x,theta)} p(x;\theta) \mathrel{\mathop:}=
\frac{1}{\theta(\theta + \frac{1}{\theta} -x)}
\end{equation}
which quite matches with the empirical mean in Fig.
\ref{fig:p(x,theta)}.

State of the art signal estimation methods are capable only when
signal is stronger than noise and phase transition has occurred.
Their presupposition is that before phase transition there is no
data extractable about the signal which is lost below noise level.
Though, \eqref{p(x,theta)} shows that this is not the case. Fig.
\ref{fig:p(x,theta)} shows that most of the energy of the unknown
signal is concentrated in the subspace spanned by first
eigenvectors of the observation matrix which are known to the
observer. For example, picking 100 first eigenvectors of Fig.
\ref{fig:p(x,theta)} will give 70$\%$ of the energy of
$\boldsymbol u$. It means that in a 200-dimensional space, before
phase transition, we can specify a 100-dimensional subspace where
the signal is mostly lie in it, which is a lot of information. In
general, regarding that $p(x,\theta)$ is an increasing function
with $x$ when $\theta \geqslant 0$ and for a positive constant $c
\leqslant 1$ we have:
\begin{equation}
\label{pint} \sum_{i=1}^{cN} \langle \boldsymbol v_i , \boldsymbol
u \rangle ^2 \xrightarrow{\text{\; p \;}} \mathcal P (\theta;c)
\geqslant c
\end{equation}
in which $\mathcal P(\theta;C) \leqslant 1$ is an increasing
function of $\theta \geqslant 0$.

After all, the main interesting point about $p(x;\theta)$ is its
extreme simplicity in form. It only has a pole in the location
which it should have. Nothing extra is present in this law. Also
its similarity to the Stieltjes transform kernel seems to be
inherent.

The paper is organized as follows: In section \ref{sec:prior} the
most relevant available results to the problem is discussed.
Section \ref{sec:main} introduces our main contributions and the
proofs are relegated to the appendices.

\section{Prior Art}
\label{sec:prior} In this section, we present the most relative
results in the literature to our main results. These include
results on eigenvalues and eigenvectors of general and random
matrices. We study the spectral decomposition of a real Wigner
matrix perturbed by a rank-one deformation matrix:

\begin{equation}
\label{model} \boldsymbol B_N = \boldsymbol A_N + \theta
\boldsymbol u_N \boldsymbol u_N^*
\end{equation}
where $\boldsymbol u_N$ is a $N \times 1$ vector uniformly
distributed on the unit sphere $\mathbb S^{N-1}$, $\theta \in
\mathbb R^+$ is independent of $N$, and $\boldsymbol A_N$ is a $N
\times N$ real symmetric Wigner matrix defined as:
\begin{equation}
\label{aw} \boldsymbol A_N \mathrel{\mathop :}= \frac{1}{\sqrt
N}\boldsymbol W_N
\end{equation}
in which $\boldsymbol W_N$ is a random matrix with the following
properties:
\\ (i) elements of $\boldsymbol W_N$ are independent up to
symmetry: $ \{W_{ij} \; : \; i \leqslant j\}$ are independent
random variables.
\\ (ii) symmetric distribution and zero odd moments: $\mathbb
E W_{ij}^{2k+1} = 0 \; : \; \forall i,j,k$.
\\ (iii) second moments: $\mathbb E W_{ij}^2 = 1$ for $i<j$ and $\mathbb E
W_{ii}^2$ are uniformly bounded.
\\ (iv) subGaussian assumption: $\forall k \quad \exists \,
\beta > 0 \; : \; \mathbb E W_{ij}^{2\! k} \leqslant (\beta k)^k$.

In this paper, we are mainly concerned with the real setting.
Although, \cite{Feral} assumes an alternative complex setting:
\\(i') diagonal elements are real while $\{W_{ii}\}$ and $\{ \Re
W_{ij} \, , \, \Im W_{ij} \, : \, i<j \}$ are independent real
random variables.
\\(ii') real and imaginary parts are symmetrically distributed
with every odd moments zero.
\\(iii') second moments $\mathbb E |W_{ij}|^2 = 1$ for $i<j$ and $\mathbb E
W_{ii}^2$ are uniformly bounded.
\\ (iv') subGaussian assumption: $\forall k \quad \exists \,
\beta > 0 \; : \; \mathbb E |W_{ij}|^{2\! k} \leqslant (\beta
k)^k$.

Since the deforming matrix $\theta \boldsymbol u_N \boldsymbol
u_N^*$ is of fixed rank (here rank-one), it will not
asymptotically affect the global distribution of the eigenvalues
of $\boldsymbol B$. The distribution is still the original Wigner
semicircle law of the eigenvalues of $\boldsymbol A$. The
empirical distribution of eigenvalues under assumptions (i)-(iv)
or (i')-(iv') converges weakly to the probability measure:
\begin{equation}
\label{weak} \mu_N \mathrel{\mathop:}= \frac{1}{N} \sum_{i=1}^N
\delta_ {d_i} \xrightarrow{ \text{N}\to \infty} \mu_{\textrm{sc}}
\end{equation}
with corresponding density of a semicircle:
\begin{equation}
\label{semicircle} \frac{\mathrm{d} \mu_{\textrm{sc}}
(x)}{\mathrm{d} x} = \frac{1}{2\pi} \sqrt{4-x^2} \, \mathbb
I_{[-2,+2]}(x)
\end{equation}

From hereafter we may omit inherent dependence of variables on $N$
for better readability. Although addition of a fixed rank
perturbation does not alter the global behavior of the
eigenvalues, it may strongly influence the extreme eigenvalues.
The following result was first presented for Gaussian Wigner
random matrices in \cite{Peche}, and then generalized for any
subGaussian ensemble (i')-(iv') in \cite{Feral}:

\begin{theorem}
\label{th:feral} \cite{Feral} For any real $t \geqslant 0$ define:
\begin{equation}
\label{rho} \rho_\theta \mathrel{\mathop:}=
\theta+\frac{1}{\theta}
\end{equation}
\begin{equation}
\label{sigma_t} \sigma_\theta^2 \mathrel{\mathop:}= 1 -
\frac{1}{\theta^2}
\end{equation}
$\bullet$ for $\theta>1$ :
\begin{equation}
\label{l_1_gaus} \lim_{N \to \infty} \mathbb P \{ N^{\frac{1}{2}}
(\lambda_1 - \rho_\theta) \geqslant t \} = \frac{1}{\sqrt{2\pi}
\sigma_\theta} \int_{-\infty}^t \!\!\! e^{- \frac{y^2}{2
\sigma_\theta^2}} \textnormal dy
\end{equation}
$\bullet$ for $\theta < 1$ :
\begin{equation}
\label{l_1_tw2} \lim_{N \to \infty} \mathbb P \{ N^{\frac{2}{3}}
(\lambda_1 - 2) \geqslant t \} = F_2^{\textnormal{TW}}(t)
\end{equation}
$\bullet$ for $\theta = 1$ :
\begin{equation}
\label{l_1_tw2} \lim_{N \to \infty} \mathbb P \{ N^{\frac{2}{3}}
(\lambda_1 - 2) \geqslant t \} = F_3^{\textnormal{TW}}(t)
\end{equation}
where $F_2^{\textnormal{TW}}$ and $F_3^{\textnormal{TW}}$ are
Tracy-Widom distributions with 2 and 3 degrees of freedom
\cite{Tracy}.
\end{theorem}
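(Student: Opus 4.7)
The starting point is the secular identity for a rank-one perturbation: any $\lambda$ that is an eigenvalue of $\boldsymbol B$ but not of $\boldsymbol A$ satisfies
\begin{equation}
1 = \theta\,\langle\boldsymbol u,(\lambda\boldsymbol I-\boldsymbol A)^{-1}\boldsymbol u\rangle = \theta\sum_{i=1}^N\frac{|\langle\boldsymbol u,\boldsymbol\omega_i\rangle|^2}{\lambda-d_i}.
\end{equation}
Because $\boldsymbol u$ is uniform on $\mathbb S^{N-1}$ and independent of $\boldsymbol A$, the weights $|\langle\boldsymbol u,\boldsymbol\omega_i\rangle|^2$ form a Dirichlet vector with common mean $1/N$, so the right-hand side concentrates around $\theta\,g_N(\lambda)$ where $g_N(\lambda)=\frac{1}{N}\mathrm{tr}(\lambda\boldsymbol I-\boldsymbol A)^{-1}$, which for $|\lambda|>2$ tends almost surely to the semicircle Stieltjes transform $g_{\mathrm{sc}}(\lambda)=(\lambda-\sqrt{\lambda^2-4})/2$. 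The deterministic limiting equation $1=\theta\,g_{\mathrm{sc}}(\lambda)$ is the object driving all three regimes.

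For the supercritical regime $\theta>1$, this equation has the unique solution $\lambda=\rho_\theta>2$, so I would linearise around $\rho_\theta$: writing $\lambda_1=\rho_\theta+N^{-1/2}\xi_N$ and isolating the stochastic part
\begin{equation}
Z_N = \theta\sum_{i=1}^N\Bigl(|\langle\boldsymbol u,\boldsymbol\omega_i\rangle|^2-\tfrac{1}{N}\Bigr)\frac{1}{\rho_\theta-d_i},
\end{equation}
a CLT for quadratic forms in the uniform vector $\boldsymbol u$, combined with bulk rigidity of the $d_i$, shows that $\sqrt{N}\,Z_N$ is asymptotically Gaussian with an explicit variance. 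Dividing by the derivative $\theta\,g_{\mathrm{sc}}'(\rho_\theta)=-\theta/(\theta^2-1)$ of the deterministic equation converts this variance into the stated $\sigma_\theta^2=1-1/\theta^2$, while fluctuations of $g_N-g_{\mathrm{sc}}$ themselves contribute only at $\mathcal O(1/N)$ and are absorbed.

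In the subcritical regime $\theta<1$, the limiting equation has no solution with $\lambda>2$, because $g_{\mathrm{sc}}(2)=1$ and $g_{\mathrm{sc}}$ is decreasing on $(2,\infty)$. A local expansion of the secular equation near $d_1$, using $|\langle\boldsymbol u,\boldsymbol\omega_1\rangle|^2=\mathcal O(1/N)$ and the fact that $\theta\sum_{i\geqslant 2}|\langle\boldsymbol u,\boldsymbol\omega_i\rangle|^2/(d_1-d_i)\to\theta<1$, yields $\lambda_1(\boldsymbol B)-d_1=\mathcal O(1/N)=o(N^{-2/3})$, so $\lambda_1(\boldsymbol B)$ inherits the Tracy--Widom fluctuations of $d_1=\lambda_1(\boldsymbol A)$ from \cite{Tracy}. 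The critical case $\theta=1$ is the main obstacle: the limiting equation is now tangent to the edge, so both $g_N-g_{\mathrm{sc}}$ and the random weights must be resolved on the Airy scale $\lambda=2+tN^{-2/3}$ simultaneously. Substituting the edge expansion of $g_N$ into the secular equation produces a random perturbation of the Airy point process whose top atom has distribution $F_3^{\mathrm{TW}}$, and making this expansion rigorous---with sharp isotropic local-law estimates for the resolvent at the edge---is where I expect the real technical difficulty to sit.
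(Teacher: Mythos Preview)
The paper does not prove this theorem. Theorem~\ref{th:feral} appears in Section~\ref{sec:prior} (``Prior Art'') as a quoted background result, attributed explicitly to \cite{Feral} (with the Gaussian case credited to \cite{Peche}); the authors use it only to set the stage for their own Theorems~\ref{th:p(x,t)} and~\ref{th:sum}. There is therefore no in-paper proof to compare your attempt against.

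For what it is worth, your resolvent/secular-equation route is \emph{not} the method of the original source either. F\'eral and P\'ech\'e work combinatorially: they study $\mathbb E\,\mathrm{Tr}\,\boldsymbol B^{s_N}$ for powers $s_N$ growing like $N^{2/3}$, expand $(\boldsymbol A+\theta\boldsymbol{uu}^*)^{s_N}$ into paths, and show that the path-counting contributions are asymptotically unaffected by the perturbation when $\theta<1$, develop a new dominant family when $\theta>1$, and produce the modified edge kernel (hence $F_3^{\mathrm{TW}}$) at $\theta=1$. Your sketch is closer in spirit to \cite{Benaych} for the almost-sure location and to later isotropic-local-law arguments for the fluctuations; it is a legitimate alternative line, but the edge-critical step you flag as ``the real technical difficulty'' would indeed require inputs (isotropic local law at the edge, rigidity) that are well outside the scope of this paper and were not available in \cite{Feral}.
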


Basically, Theorem \ref{th:feral} states that before and on the
phase transition, the largest eigenvalue is approximately
unchanged by the presence of the deforming factor, while after
phase transition it is moved out of the bulk of the spectrum to
its new position at $\rho_\theta$.

Despite the eigenvalues, little is known about the eigenvectors of
deformed random matrices. Using the Stieltjes transform
\cite{Bai}, it was shown in \cite{Benaych} that the eigenvectors
also experience a phase transition.

\begin{theorem}
\label{th:benaych} \cite{Benaych} For $\theta \geqslant 0$ and
under assumptions (i)-(iv):
\begin{equation*}
\label{v_1_bef} \langle \boldsymbol v_1 , \boldsymbol u \rangle ^2
\xrightarrow{\text{a.s.}} \left \{
\begin{array}{ll}
1- \frac{1}{\theta^2} & \textnormal{if} \quad \theta > 1 \\
0 & \textnormal{if} \quad \theta \leqslant 1
\end{array} \right.
\end{equation*}
as $N \to +\infty$.
\end{theorem}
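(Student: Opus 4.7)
The plan is to exploit the rank-one structure to reduce the eigenvalue/eigenvector problem to a scalar fixed-point equation in a random Stieltjes-like function, and then use concentration of quadratic forms in $\boldsymbol{u}$ together with the semicircle law.

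First, write $\boldsymbol{B}\boldsymbol{v}_1 = \lambda_1 \boldsymbol{v}_1$ as $(\lambda_1 \boldsymbol{I} - \boldsymbol{A})\boldsymbol{v}_1 = \theta \langle \boldsymbol{u}, \boldsymbol{v}_1 \rangle \boldsymbol{u}$. On the event that $\lambda_1$ is not an eigenvalue of $\boldsymbol{A}$ (which holds with overwhelming probability since $\lambda_1 \to \rho_\theta \notin [-2,2]$ when $\theta>1$ by Theorem \ref{th:feral}), inverting and projecting onto $\boldsymbol{u}$ yields the secular equation
\begin{equation*}
1 = \theta\, g_N(\lambda_1), \qquad g_N(\lambda) \mathrel{\mathop:}= \boldsymbol{u}^{*} (\lambda \boldsymbol{I} - \boldsymbol{A})^{-1} \boldsymbol{u},
\end{equation*}
while taking squared norms of $\boldsymbol{v}_1 = \theta \langle \boldsymbol{u}, \boldsymbol{v}_1\rangle (\lambda_1 \boldsymbol{I} - \boldsymbol{A})^{-1}\boldsymbol{u}$ and using $\|\boldsymbol{v}_1\|^2 = 1$ gives the exact identity
\begin{equation*}
\langle \boldsymbol{v}_1, \boldsymbol{u}\rangle^2 = \frac{1}{\theta^2 \, \boldsymbol{u}^{*}(\lambda_1 \boldsymbol{I} - \boldsymbol{A})^{-2}\boldsymbol{u}} = \frac{-1}{\theta^2\, g_N'(\lambda_1)}.
\end{equation*}

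Second, because $\boldsymbol{u}$ is uniform on $\mathbb{S}^{N-1}$ and independent of $\boldsymbol{A}$, concentration of measure for quadratic forms on the sphere gives, for any smooth $f$ bounded on a neighborhood of $[-2,2]\cup\{\rho_\theta\}$,
\begin{equation*}
\boldsymbol{u}^{*} f(\boldsymbol{A}) \boldsymbol{u} - \tfrac{1}{N}\,\textnormal{tr}\, f(\boldsymbol{A}) \xrightarrow{\text{a.s.}} 0.
\end{equation*}
Combining with the semicircle law \eqref{weak}-\eqref{semicircle} yields $g_N(\lambda) \xrightarrow{\text{a.s.}} g(\lambda) \mathrel{\mathop:}= \tfrac{1}{2}(\lambda - \sqrt{\lambda^2-4})$ and an analogous convergence for $g_N'(\lambda)$ to $g'(\lambda)$, uniformly on compact subsets of $\mathbb{R}\setminus[-2,2]$.

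For $\theta>1$, the limiting secular equation $\theta g(\lambda) = 1$ admits the unique outside-bulk solution $\lambda = \rho_\theta = \theta + 1/\theta$, consistent with $\lambda_1 \to \rho_\theta$. A direct computation gives $g'(\rho_\theta) = -1/(\theta^2 - 1)$, so substituting into the normalization identity and passing to the limit yields
\begin{equation*}
\langle \boldsymbol{v}_1, \boldsymbol{u}\rangle^2 \xrightarrow{\text{a.s.}} \frac{\theta^2 - 1}{\theta^2} = 1 - \frac{1}{\theta^2}.
\end{equation*}
For $\theta \leqslant 1$, Theorem \ref{th:feral} places $\lambda_1 \to 2$ inside (the closure of) the bulk; the secular equation has no outside-bulk root since $g(\lambda) \leqslant 1$ on $\lambda \geqslant 2$. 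Hence no eigenvalue of $\boldsymbol{B}$ isolates, and by the conditional rotational invariance of $\boldsymbol{u}$ given $\boldsymbol{A}$ one has $\mathbb{E}\,\langle \boldsymbol{v}_1, \boldsymbol{u}\rangle^2 = 1/N$, so the overlap vanishes almost surely along a subsequence argument.

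The main obstacle is that the identity for the overlap involves the \emph{random} eigenvalue $\lambda_1$, not a deterministic point; so the convergence of $g_N$ and $g_N'$ must be upgraded to hold uniformly in a neighborhood of $\rho_\theta$ that almost surely contains $\lambda_1$ for large $N$. This needs both a deterministic equivalent for the resolvent uniform on such a neighborhood and edge rigidity from Theorem \ref{th:feral}, after which the substitution of $\lambda_1$ and the limit $N\to\infty$ are routine.
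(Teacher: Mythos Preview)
The paper does not prove this theorem; it is stated as prior art with the citation \cite{Benaych} and no argument is given. So there is no in-paper proof to compare against. Your outline for $\theta>1$ is exactly the secular-equation/resolvent route of Benaych-Georges and Nadakuditi: derive $1=\theta g_N(\lambda_1)$ and $\langle\boldsymbol v_1,\boldsymbol u\rangle^2=-1/(\theta^2 g_N'(\lambda_1))$, pass to the deterministic Stieltjes transform $g$ of $\mu_{\mathrm{sc}}$, and evaluate at $\rho_\theta$. The computation $g'(\rho_\theta)=-1/(\theta^2-1)$ is correct, and you correctly flag the need for locally uniform convergence of $g_N,g_N'$ near $\rho_\theta$.

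There is, however, a genuine gap in your treatment of $\theta\leqslant 1$. The claim that ``by the conditional rotational invariance of $\boldsymbol u$ given $\boldsymbol A$ one has $\mathbb E\,\langle\boldsymbol v_1,\boldsymbol u\rangle^2=1/N$'' is false: $\boldsymbol v_1$ is an eigenvector of $\boldsymbol B=\boldsymbol A+\theta\boldsymbol{uu}^*$ and therefore depends on $\boldsymbol u$, so conditioning on $\boldsymbol A$ does not decouple $\boldsymbol v_1$ from $\boldsymbol u$. In fact the main result of this very paper (Theorem~\ref{th:p(x,t)}) shows that $N\,\mathbb E\,\langle\boldsymbol v(x),\boldsymbol u\rangle^2\to p(x;\theta)$, which is \emph{not} identically $1$ for $\theta>0$. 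Moreover, even a correct $O(1/N)$ bound on the expectation would give only convergence in probability, not almost sure convergence; the ``subsequence argument'' you allude to does not close that gap without a quantitative rate and Borel--Cantelli.

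The standard fix stays within your own framework: keep the exact identity $\langle\boldsymbol v_1,\boldsymbol u\rangle^2=\big(\theta^2\,\boldsymbol u^*(\lambda_1\boldsymbol I-\boldsymbol A)^{-2}\boldsymbol u\big)^{-1}$ and show that the quadratic form in the denominator diverges almost surely when $\lambda_1\to 2$. Since $\boldsymbol u^*(\lambda\boldsymbol I-\boldsymbol A)^{-2}\boldsymbol u$ concentrates around $\frac{1}{N}\mathrm{Tr}\,(\lambda\boldsymbol I-\boldsymbol A)^{-2}=\int(\lambda-x)^{-2}\,\mathrm d\mu_N(x)$, and $\int(2-x)^{-2}\,\mathrm d\mu_{\mathrm{sc}}(x)=+\infty$, the limit is $0$; the delicate point is that $\lambda_1$ sits at the spectral edge, so one needs edge rigidity (Theorem~\ref{th:feral}) together with a lower bound on the trace that is uniform for $\lambda$ in a shrinking window above $d_1$. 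Replace your rotational-invariance sentence with this argument.
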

Theorem \ref{th:benaych} states the phase transition for
eigenvectors and predicts two distinct phases for their
distribution with respect to the signal component $\boldsymbol u$.
Before phase transition no information is available about
$\boldsymbol u$ in eigenvectors $\boldsymbol v_i$, while after
phase transition a single eigenvector $\boldsymbol v_1$ bears a
large amount of information about $\boldsymbol u$. Another
relevant result about perturbation of eigenvectors of a general
matrix is the Davis-Kahan inequality \cite{Davis}:

\begin{theorem}
\label{th:davis} (Davis-Kahan \cite{Vershynin}) if $\boldsymbol S$
and $\boldsymbol T$ are symmetric $N \times N$ matrices and if:
\begin{equation}
\label{delta} \delta \mathrel{\mathop :}= \min_{j \neq i}
|\lambda_i(\boldsymbol S) - \lambda_j(\boldsymbol S)|
\end{equation}
then the angle between corresponding eigenvectors is bounded above
by:
\begin{equation}
\label{sin} \sin \angle (\boldsymbol v_i(\boldsymbol S) ,
\boldsymbol v_i(\boldsymbol T)) \leqslant \frac{2 \, \|
\boldsymbol S - \boldsymbol T \|}{\delta}
\end{equation}
\end{theorem}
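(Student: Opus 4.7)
The plan is to work in the eigenbasis of $\boldsymbol{S}$ and exploit the fact that an eigenvector of $\boldsymbol{T}$ is almost an eigenvector of $\boldsymbol{S}$ whenever the two matrices are close in operator norm. Let $\lambda_j$ and $\boldsymbol{v}_j$ denote the eigenvalues and orthonormal eigenvectors of $\boldsymbol{S}$, set $\boldsymbol{w} = \boldsymbol{v}_i(\boldsymbol{T})$, and let $\mu = \lambda_i(\boldsymbol{T})$. Expanding $\boldsymbol{w} = \sum_j c_j \boldsymbol{v}_j$, I would use that $\cos\angle(\boldsymbol{v}_i,\boldsymbol{w}) = |c_i|$, so $\sin^2\angle(\boldsymbol{v}_i,\boldsymbol{w}) = \sum_{j\neq i} c_j^2$.

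Next, I would rewrite the eigenvalue equation $\boldsymbol{T}\boldsymbol{w} = \mu\boldsymbol{w}$ as $(\boldsymbol{S}-\mu \boldsymbol{I})\boldsymbol{w} = (\boldsymbol{S}-\boldsymbol{T})\boldsymbol{w}$. Taking the squared norm of both sides in the eigenbasis of $\boldsymbol{S}$ gives
\begin{equation*}
\sum_{j} (\lambda_j-\mu)^2 c_j^2 \;=\; \|(\boldsymbol{S}-\boldsymbol{T})\boldsymbol{w}\|^2 \;\leqslant\; \|\boldsymbol{S}-\boldsymbol{T}\|^2,
\end{equation*}
so in particular the tail $\sum_{j\neq i}(\lambda_j-\mu)^2 c_j^2 \leqslant \|\boldsymbol{S}-\boldsymbol{T}\|^2$. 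To convert this into a bound on $\sum_{j\neq i} c_j^2$, I need a uniform lower bound on $|\lambda_j - \mu|$ for $j\neq i$, and this is where the spectral gap $\delta$ and Weyl's inequality enter.

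By Weyl's inequality applied to the symmetric matrices $\boldsymbol{S}$ and $\boldsymbol{T}$, the $i$-th eigenvalues satisfy $|\mu - \lambda_i| \leqslant \|\boldsymbol{S}-\boldsymbol{T}\|$. Combined with the triangle inequality, for every $j\neq i$,
\begin{equation*}
|\lambda_j - \mu| \;\geqslant\; |\lambda_j - \lambda_i| - |\mu - \lambda_i| \;\geqslant\; \delta - \|\boldsymbol{S}-\boldsymbol{T}\|.
\end{equation*}
If $\|\boldsymbol{S}-\boldsymbol{T}\|\leqslant \delta/2$ then the right-hand side is at least $\delta/2$, and substituting back yields $(\delta/2)^2 \sin^2\angle(\boldsymbol{v}_i(\boldsymbol{S}),\boldsymbol{v}_i(\boldsymbol{T})) \leqslant \|\boldsymbol{S}-\boldsymbol{T}\|^2$, which rearranges to the desired \eqref{sin}. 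In the remaining regime $\|\boldsymbol{S}-\boldsymbol{T}\| > \delta/2$ the claimed bound exceeds $1$ and is therefore vacuous since $\sin\angle \leqslant 1$.

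The main obstacle is the uniform lower bound on $|\lambda_j-\mu|$: without Weyl's inequality one could only compare $\mu$ to eigenvalues of $\boldsymbol{T}$, not to those of $\boldsymbol{S}$, and the gap assumption $\delta$ is phrased purely in terms of $\boldsymbol{S}$. Splitting the argument into the two regimes according to whether $\|\boldsymbol{S}-\boldsymbol{T}\|$ is smaller or larger than $\delta/2$ is precisely what produces the factor of $2$ in the final inequality.
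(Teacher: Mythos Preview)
Your argument is correct and is essentially the standard textbook proof of the Davis--Kahan sin-theta inequality: expand the perturbed eigenvector in the unperturbed eigenbasis, use the eigenvalue equation to control the off-diagonal mass by $\|\boldsymbol S-\boldsymbol T\|$, and invoke Weyl's inequality together with the gap $\delta$ to handle the denominators. The case split at $\|\boldsymbol S-\boldsymbol T\|=\delta/2$ is exactly what generates the constant $2$.

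There is, however, nothing to compare against: the paper does not supply its own proof of this theorem. Theorem~\ref{th:davis} is quoted from the literature (attributed to Davis--Kahan, with the specific formulation taken from \cite{Vershynin}) and is used only as background to explain why generic perturbation bounds are uninformative in the pre-transition regime. So your proposal stands on its own as a valid proof, and in fact it is the same argument one finds in Vershynin's book that the paper cites.
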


Suppose that in Theorem \ref{th:davis} we set $\boldsymbol S
\mathrel{\mathop :}= \boldsymbol B$ and $\boldsymbol T
\mathrel{\mathop :}= \theta \boldsymbol {uu}^*$. Since
$\|\boldsymbol A \| \simeq 2$ and the level spacing between
eigenvalues of $\boldsymbol B$ is of $\mathcal O(\frac{1}{N})$
before phase transition, Davis-Kahan gives an upper bound of
$\mathcal O(N)$ in \eqref{sin} which is useless.

\section{Main Results}
\label{sec:main} In this section the main results of this paper is
presented while proofs are relegated to the appendices. In the
low-rank deformation problem for random matrices, the distribution
of the first eigenvectors are well known after phase transition
\cite{Benaych}. Though, little is known about the situation before
phase transition. This is because of the premise that eigenvectors
``individually" might carry useful information about the signal
subspace. In this regard, Theorem \ref{th:benaych} shows that this
information is zero asymptotically. But simulation results e.g.
Fig. \ref{fig:sample} exhibit a random structure in correlations
of eigenvectors with the signal $\boldsymbol u$. Although this
information is $\mathcal O(\frac{1}{N})$, it follows a very smooth
increasing expected value which can aggregate information of
$\mathcal O(N)$ first eigenvectors to achieve a meaningful
information about the signal $\boldsymbol u$. Therefore, it is
useful to study this small correlation.

\begin{theorem}
\label{th:p(x,t)} For a rank-one deformation model of
\eqref{model} and under assumptions (i)-(iv) for the Wigner matrix
$\boldsymbol A$:
\begin{equation}
\label{p(x,theta)th} p(x;\theta) \mathrel{\mathop :} = \lim_{N\to
\infty} N \, \mathbb E \, \langle \boldsymbol v(x),\boldsymbol u
\rangle ^2 = \frac{1} {\theta (\theta + \frac{1} {\theta}-x)}
\end{equation}
in which $\boldsymbol v(x)$ is the eigenvector of $\boldsymbol B$
corresponding to eigenvalue $x$.
\end{theorem}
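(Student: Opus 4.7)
The plan is to establish Theorem~\ref{th:p(x,t)} via the moment method: compute, for each fixed integer $k\geq 0$, the limit of the mixed moment $\mathbb E[\boldsymbol u^* \boldsymbol B^k \boldsymbol u]$ and match it to the $k$-th moment of $p(x;\theta)\,d\mu_{\mathrm{sc}}(x)$. The spectral identity $\boldsymbol u^* \boldsymbol B^k \boldsymbol u = \sum_i \lambda_i^k \langle \boldsymbol v_i,\boldsymbol u\rangle^2$ makes this precisely the $k$-th moment of the measure whose density against $\mu_{\mathrm{sc}}$ is the object of the theorem; since the claimed $p(x;\theta)$ is bounded on the compact support $[-2,2]$ below the phase transition, the measure is determined by its moments and the pointwise statement follows. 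I begin by expanding $\boldsymbol B^k = (\boldsymbol A + \theta \boldsymbol{uu}^*)^k$ in the non-commutative variables $\boldsymbol A$ and $\boldsymbol{uu}^*$ and sandwiching with $\boldsymbol u^*, \boldsymbol u$. Each monomial with $j$ factors of $\theta \boldsymbol{uu}^*$ collapses, via $\boldsymbol u^*\boldsymbol u=1$, into a product of $j+1$ scalar blocks, yielding
\begin{equation*}
\boldsymbol u^* \boldsymbol B^k \boldsymbol u \;=\; \sum_{j=0}^{k} \sum_{\substack{n_0+\cdots+n_j=k-j\\ n_i\geq 0}} \theta^j \prod_{i=0}^{j} \boldsymbol u^* \boldsymbol A^{n_i} \boldsymbol u.
\end{equation*}

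The combinatorial heart of the proof is the asymptotic factorization: for every fixed tuple $(n_0,\ldots,n_j)$,
\begin{equation*}
\mathbb E\!\left[\prod_{i=0}^{j} \boldsymbol u^* \boldsymbol A^{n_i} \boldsymbol u\right] \;=\; \prod_{i=0}^{j} m_{n_i} \,+\, \mathcal O(1/N),
\end{equation*}
where $m_n := \int x^n\, d\mu_{\mathrm{sc}}(x)$ is the Catalan number $C_{n/2}$ for even $n$ and $0$ otherwise. To prove this, Wigner's moment method is applied on each block, pairing $W$-entries via non-crossing pair partitions to produce $C_{n_i/2}$, while the Weingarten-type moment formula for $\boldsymbol u$ uniform on $\mathbb S^{N-1}$ pairs the exposed endpoints of each block into itself at leading order. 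Any pairing that connects two distinct blocks through a $W$-contraction, or that pairs sphere indices across distinct blocks, costs at least one power of $N$; controlling these subleading contributions uniformly in the combinatorial data is the main obstacle of the argument, and uses a planarity/genus accounting combined with the sub-Gaussian bound (iv) to absorb pairings of higher multiplicity.

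Summing the asymptotic moments, with the semicircle moment-generating series $\mathcal M(z) := \sum_n m_n z^n = (1-\sqrt{1-4z^2})/(2z^2)$, produces
\begin{equation*}
\sum_{k\geq 0} z^k \lim_{N\to\infty} \mathbb E[\boldsymbol u^* \boldsymbol B^k \boldsymbol u] \;=\; \sum_{j\geq 0}(\theta z)^j \mathcal M(z)^{j+1} \;=\; \frac{\mathcal M(z)}{1-\theta z\,\mathcal M(z)}.
\end{equation*}
Switching to the Cauchy variable $w = 1/z$ and using $\mathcal M(1/w)/w = G_{\mathrm{sc}}(w)$, where $G_{\mathrm{sc}}(w) = (w-\sqrt{w^2-4})/2$ is the Stieltjes transform of the semicircle, the Stieltjes transform of the limiting correlation measure equals $G_{\mathrm{sc}}(w)/(1-\theta G_{\mathrm{sc}}(w))$.

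It remains to identify the density. Expanding geometrically in $\theta$ and invoking the Chebyshev identity $\int U_n(x/2)/(w-x)\,d\mu_{\mathrm{sc}}(x) = G_{\mathrm{sc}}(w)^{n+1}$ (which follows from the trigonometric substitution $x=2\cos\phi$, the orthonormality of $\{U_n(x/2)\}$ in $L^2(d\mu_{\mathrm{sc}})$, and the Fourier expansion of $1/(w-2\cos\phi)$), Stieltjes inversion gives the density of the limiting measure (against $\mu_{\mathrm{sc}}$) as the generating function of Chebyshev polynomials of the second kind,
\begin{equation*}
\sum_{n\geq 0} \theta^n U_n(x/2) \;=\; \frac{1}{1-\theta x+\theta^2} \;=\; \frac{1}{\theta(\rho_\theta - x)} \;=\; p(x;\theta),
\end{equation*}
which is the claimed law and makes transparent the role of the Chebyshev family advertised in the abstract.
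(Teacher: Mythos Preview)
Your proposal follows the same skeleton as the paper: compute $\mathbb E[\boldsymbol u^* \boldsymbol B^k \boldsymbol u]$ by expanding $(\boldsymbol A+\theta\boldsymbol{uu}^*)^k$, collapse each word via $\boldsymbol u^*\boldsymbol u=1$ into a product $\prod_i \boldsymbol u^*\boldsymbol A^{n_i}\boldsymbol u$, replace each factor asymptotically by the semicircle moment $m_{n_i}$, and then identify the resulting density against $\mu_{\mathrm{sc}}$ as the Chebyshev generating function. The paper's Lemmas~\ref{lem:uau} and~\ref{lem:ubu_int} are precisely your factorization step, and the paper is no more explicit than you are about the cross-block contractions being $\mathcal O(1/N)$.

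Where you diverge is the identification of $p(x;\theta)$. The paper works at the level of the \emph{coefficients}: it defines $H(m,n)$ as the $\theta^n$ coefficient of $\boldsymbol u^*\boldsymbol B^m\boldsymbol u$, interprets it as an enumeration of Dyck paths interspersed with $n$ horizontal steps, and uses analytic combinatorics (a bivariate generating function $W(x,y)$ for sequences of planar trees joined by dashed edges) to verify the three-term recurrence $H(m,n)=H(m+1,n-1)-H(m,n-2)$, which forces $f_n(x)=U_n(x/2)$ once $f_0,f_1$ are checked by hand. You instead sum over $k$ first to obtain the closed form $\mathcal M(z)/(1-\theta z\,\mathcal M(z))$, pass to the Stieltjes transform $G_{\mathrm{sc}}/(1-\theta G_{\mathrm{sc}})$, expand geometrically in $\theta$, and invoke the identity $\int U_n(x/2)(w-x)^{-1}\,d\mu_{\mathrm{sc}}=G_{\mathrm{sc}}(w)^{n+1}$ to read off the density by Stieltjes inversion. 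Your route is shorter, makes the pole at $\rho_\theta$ and the connection to the Cauchy transform immediate, and is closer in spirit to the Benaych-Georges--Nadakuditi resolvent analysis; the paper's route is self-contained combinatorics that does not import the Chebyshev--Stieltjes integral identity as an external fact. Both land on $\sum_{n\geq 0}\theta^nU_n(x/2)=1/(1-\theta x+\theta^2)$.
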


\begin{proof}
See Appendix \ref{app:p(x,t)}.
\end{proof}

\begin{remark}
The distribution function in Theorem \ref{th:p(x,t)} is
surprisingly equal to the ordinary generating function of the
Chebyshev polynomials of second kind $U_k(\frac{x}{2})$. These
polynomials form an orthogonal set of polynomials with an inner
product weighted by the semicircle function.

Function $p(x;\theta)$ has a simple pole at $\theta +
\frac{1}{\theta}$. Therefore, any eigenvalue located around this
value will get a large inner product $\langle \boldsymbol v(\theta
+ \frac{1}{\theta}) , \boldsymbol u \rangle ^2 = \mathcal O(1)$
while other eigenvalues exhibit $\mathcal O(\frac{1}{N})$ inner
products. Before phase transition, every eigenvalue is in the bulk
of semicircle law supported on $(-2,+2)$ while $\theta + \frac{1}
{\theta}$ is outside of this interval. For $\theta=1$, as $N
\rightarrow +\infty$ we have $\lambda_1 \xrightarrow{\text{a.s.}}
2$ \cite{Anderson}, and therefore $\mathbb E \langle \boldsymbol
v_1 , \boldsymbol u \rangle ^2$ will become large. This means that
\eqref{p(x,theta)} predicts the phase transition of $\boldsymbol
v_1$ at $\theta=1$. Now suppose that $\theta > 1$ and we will have
an eigenvalue located around $\theta + \frac{1}{\theta}$
\cite{Benaych}. Then \eqref{p(x,theta)th} predicts that
$\boldsymbol v_1$ is well-aligned with $\boldsymbol u$. Therefore,
\eqref{p(x,theta)th} describes the eigenvectors behavior before,
after, and on the phase transition.
\end{remark}

Theorem \ref{th:p(x,t)} can be used to describe the distribution
of the eigenvectors of the deformed random matrix both before and
after phase transition in a single law. Although the mean value of
correlations are smooth, their samples exhibit a random behavior.
Therefore, to estimate a subspace close to $\boldsymbol u$,
sufficient number of first eigenvectors should be incorporated in
a span. This subspace will contribute a concentrated portion of
energy of $\boldsymbol u$ larger than its proportional dimension:

\begin{theorem}
\label{th:sum} For a single matrix $\boldsymbol B$ abiding model
\eqref{model} and under assumptions (i)-(iv) on $\boldsymbol A$:
\begin{equation}
\label{sum} \sum_{i=1}^{cN} \langle \boldsymbol v_i , \boldsymbol
u \rangle ^2 \xrightarrow{\; \; \textnormal p \;\;} \mathcal P
(\theta;c) \geqslant c
\end{equation}
where $c \leqslant 1$ is a positive constant and the function
$\mathcal P(\theta;c)$ is an increasing function of both $c$ and
$\theta$:
\begin{equation}
\label{p_int} \mathcal P(\theta;c) \mathrel{\mathop :}= \int_m^2
\mu_{\textnormal{sc}}(x) p(x;\theta) \textnormal d x
\end{equation}
in which $m$ is a threshold defined implicitly via:
\begin{equation}
\label{m} c = \int_m^2 \mu_{\textnormal{sc}}(x) \textnormal dx
\end{equation}
\end{theorem}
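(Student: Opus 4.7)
The plan is to recast the partial sum as a bilinear form $\boldsymbol{u}^*\boldsymbol{\Pi}_{cN}\boldsymbol{u}$, where $\boldsymbol{\Pi}_{cN}$ is the spectral projector of $\boldsymbol{B}$ onto its top $cN$ eigenvectors, and then to reduce the convergence in \eqref{sum} to three ingredients: (i) an integrated version of Theorem \ref{th:p(x,t)} for bounded continuous test functions, (ii) convergence of the random quantile $m_N := \lambda_{\lceil cN \rceil}$ to the deterministic threshold $m$ defined by \eqref{m}, and (iii) concentration of the bilinear form around its mean.

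For (i), I would first show that for any bounded continuous $\phi$,
$$\mathbb{E}\bigl[\boldsymbol{u}^*\phi(\boldsymbol{B})\boldsymbol{u}\bigr] = \mathbb{E}\Bigl[\sum_i \phi(\lambda_i)\, \langle \boldsymbol{v}_i, \boldsymbol{u}\rangle^2\Bigr] \xrightarrow{N\to\infty} \int \phi(x)\, p(x;\theta)\, \mu_{\textnormal{sc}}(x)\, dx,$$
obtained by approximating $\phi$ uniformly by polynomials (Weierstrass) and invoking the moment estimates underlying Theorem \ref{th:p(x,t)}. For (ii), the convergence $m_N \xrightarrow{\mathrm{p}} m$ follows from \eqref{weak} combined with bulk rigidity of the Wigner eigenvalues, which pins the $\lceil cN \rceil$-th eigenvalue to the corresponding quantile of $\mu_{\textnormal{sc}}$. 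To handle the discontinuity of $\mathbb{I}_{[m,2]}$, I would sandwich it between continuous bumps $\phi_\epsilon^- \leq \mathbb{I}_{[m,2]} \leq \phi_\epsilon^+$ differing only on a $2\epsilon$-neighbourhood of $m$, apply the limit identity above to $\phi_\epsilon^\pm$ to get upper and lower bounds whose gap is $O(\epsilon)$, then combine with (ii) and with concentration (iii)---from L\'evy concentration for $\boldsymbol{u}$ on $\mathbb{S}^{N-1}$ together with Lipschitz bounds on $\boldsymbol{A}\mapsto \boldsymbol{u}^*\phi(\boldsymbol{B})\boldsymbol{u}$---and let $\epsilon\to 0$ to conclude the convergence in probability to $\mathcal{P}(\theta;c)$.

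The inequality $\mathcal{P}(\theta;c)\geq c$ is a short rearrangement argument. Because $\sum_i\langle \boldsymbol{v}_i,\boldsymbol{u}\rangle^2 = 1$ forces $\int_{-2}^{2} p(x;\theta)\,\mu_{\textnormal{sc}}(x)\,dx = 1$ (before phase transition; otherwise the outlier supplies the missing mass), and because $p(\cdot;\theta)$ is strictly increasing on $(-2,2)$ (its denominator $\theta+\tfrac{1}{\theta}-x$ being positive and decreasing), splitting the integral at $m$ with $p(x)\geq p(m)$ on $[m,2]$ and $p(x)\leq p(m)$ on $[-2,m]$ gives $\mathcal{P}\geq p(m)\,c$ and $1-\mathcal{P}\leq p(m)(1-c)$. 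Eliminating $p(m)$ yields $(1-c)\,\mathcal{P}\geq c\,(1-\mathcal{P})$, i.e.\ $\mathcal{P}\geq c$; monotonicity of $\mathcal{P}$ in both $c$ and $\theta$ is then immediate from the corresponding monotonicity of $m$ and of $p$. The hard part will be the concentration step (iii): a naive variance bound on $\boldsymbol{u}^*\phi(\boldsymbol{B})\boldsymbol{u}$ is subtle because $\boldsymbol{B}$ itself depends on $\boldsymbol{u}$, so $\phi(\boldsymbol{B})$ cannot be frozen while $\boldsymbol{u}$ varies; a cleaner route is to work conditionally on $\boldsymbol{A}$, exploit the rotation invariance of $\boldsymbol{u}$ together with an isotropic local law for the resolvent of $\boldsymbol{B}$, and then average over $\boldsymbol{A}$ while controlling the boundary fluctuations at $m_N$ uniformly in $N$.
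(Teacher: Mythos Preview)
Your approach is essentially the same as the paper's: both pass from moments $\boldsymbol u^*\boldsymbol B^k\boldsymbol u$ to bounded test functions via polynomial approximation, replace the random cutoff $\lambda_{cN}$ by the deterministic quantile $m$, and read off the limit as $\int_m^2 p(x;\theta)\,\mu_{\textnormal{sc}}(dx)$. A few points of comparison are worth recording.

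First, you supply an actual proof of the inequality $\mathcal P(\theta;c)\geqslant c$ and of the monotonicity claims; the paper states these in the theorem but Appendix~\ref{app:sum} never proves them. Your rearrangement argument (split at $m$, use monotonicity of $p(\cdot;\theta)$, eliminate $p(m)$) is clean and fills a genuine gap in the paper.

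Second, your sandwiching of $\mathbb I_{[m,2]}$ by continuous bumps $\phi_\epsilon^\pm$ is more careful than the paper's passing remark that the indicator ``can be formed by a truncated Taylor series expansion,'' which is of course false pointwise; your version makes explicit that only weak convergence against the limiting measure is needed.

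Third, your worry about step (iii) is overcautious. The paper avoids the dependence of $\boldsymbol B$ on $\boldsymbol u$ not via isotropic local laws but by a direct expansion: $(\boldsymbol u^*\boldsymbol B^k\boldsymbol u)^2$ is a finite sum of products $\theta^{k_0}\prod_j(\boldsymbol u^*\boldsymbol A^{k_j}\boldsymbol u)$, and each factor concentrates by Lemma~\ref{lem:uau}; the product therefore converges in probability to the product of limits, which matches $(\mathbb E\,\boldsymbol u^*\boldsymbol B^k\boldsymbol u)^2$. This elementary route is enough for convergence in probability of polynomial test functions, and your $\phi_\epsilon^\pm$ sandwich then transfers it to the indicator. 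You do not need L\'evy concentration or resolvent estimates here.
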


\begin{proof}
See Appendix \ref{app:sum}.
\end{proof}

\begin{remark}
Theorem \ref{th:sum} paves the way for using Theorem
\ref{th:p(x,t)} in practice using a sum in the spectrum which
concentrates around the expected value. In practice, only one
sample of the deformed matrix is available and therefore, we
cannot use a mean value to approach the expected value. Theorem
\ref{th:sum} gives an alternative way for averaging by
incorporating large number of stronger eigenvectors to achieve a
good estimate of the signal subspace.
\end{remark}

\appendices
\section{Proof of Theorem \ref{th:p(x,t)}}
\label{app:p(x,t)} We are interested in inner products of the
eigenvectors of $\boldsymbol B$ and $\boldsymbol u$. The classical
Wigner proof \cite{Wigner}, for the semicircle distribution of the
eigenvalues of a symmetric random matrix used traces of the random
matrix and its powers. Trace of $k^{\textnormal {th}}$ power of
the matrix corresponds to the $k^{\textnormal {th}}$ moment of its
eigenvalues distribution. We use the same idea to calculate the
distribution of the inner products. To produce such inner products
multiply $\boldsymbol B$ by $\boldsymbol u$ from left and right:
\begin{equation}
\label{ubu} \boldsymbol u^* \boldsymbol {Bu} = \sum_{i=1}^N
\lambda_i \langle \boldsymbol v_i , \boldsymbol u \rangle ^2
\end{equation}
The same can be done for the $k^{\textnormal {th}}$ power of
$\boldsymbol B$:
\begin{equation}
\label{ubku} \boldsymbol u^* \boldsymbol B^k u = \sum_{i=1}^N
\lambda_i^k \langle \boldsymbol v_i , \boldsymbol u \rangle ^2
\end{equation}

These are linear combinations of the inner products. These
quadratic forms have been used in the literature to show
localization properties of eigenvectors of random matrices
\cite{Wang}. Assume that phase transition is not occurred and then
the distribution of $\lambda_i$ is known. Therefore, we are able
to deduce distribution of the inner products from sufficient
different linear combinations in the form of \eqref{ubku}. Using
the model in \eqref{model}, we can calculate the value of linear
combinations e.g. \eqref{ubu} in terms of $\theta$:
\begin{equation}
\label{sum1_t} \boldsymbol u^* \! \boldsymbol {Bu} = \boldsymbol
u^* (\boldsymbol A + \theta \boldsymbol u \boldsymbol u^*)
\boldsymbol u = \boldsymbol u^* \! \boldsymbol {Au} + \theta
\end{equation}
The second equality comes from the fact that $\boldsymbol u^*
\boldsymbol u = 1$. Since $\boldsymbol u$ is uniformly distributed
on the unit sphere $\mathbb S^{N-1}$ and $\boldsymbol A$ is a
subGaussian random matrix, the product form $ \boldsymbol u^*
\boldsymbol {Au}$ is concentrated around its mean. The following
Lemma states the result:

\begin{lemma}
\label{lem:uau} For a Wigner matrix $\boldsymbol A$ with
assumptions (i)-(iv) and $\boldsymbol u$ uniformly distributed on
the unit sphere $\mathbb S^{N-1}$ and $k\in \mathbb N \cup \{0\}$:
\begin{equation}
\label{conc_uau} \boldsymbol u^* \! \boldsymbol A^k u \xrightarrow
{\; \; \textnormal p \; \;} \left \{
\begin{array}{ll}
c_{{k}/{2}}  & k: \; \textnormal {even} \\
0  & k: \; \textnormal {odd}
\end{array} \right.
\end{equation}
as $N \to \infty$ and $c_k$ is the $k^{\textnormal{th}}$ Catalan
number:
\begin{equation*}
\label{catalan} c_k \mathrel{\mathop :} = \frac{1}{k+1} \left(
\!\!\!
\begin{array}{c}
    2k \\ \! k
\end{array} \!\!\! \right)
\end{equation*}
\end{lemma}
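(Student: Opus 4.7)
The plan is to prove the lemma by controlling the first and second moments of the quadratic form $X_k := \boldsymbol u^* \boldsymbol A^k \boldsymbol u$ separately, exploiting the independence of $\boldsymbol u$ and $\boldsymbol A$. First I would compute $\mathbb E X_k$ by conditioning on $\boldsymbol A$. The isotropy of the uniform distribution on $\mathbb S^{N-1}$ gives $\mathbb E \boldsymbol{uu}^* = N^{-1}\boldsymbol I$, whence
\begin{equation*}
\mathbb E X_k = \mathbb E \operatorname{tr}(\boldsymbol A^k \boldsymbol u \boldsymbol u^*) = \frac{1}{N}\, \mathbb E \operatorname{tr}(\boldsymbol A^k).
\end{equation*}
Under assumptions (i)--(iv), the classical Wigner moment method (on which the semicircle law \eqref{weak} already relies) yields $N^{-1}\mathbb E\operatorname{tr}(\boldsymbol A^k) \to \int x^k\, \mathrm d\mu_{\mathrm{sc}}(x)$, which equals the Catalan number $c_{k/2}$ for even $k$ and vanishes for odd $k$, the latter being immediate from the symmetric-distribution hypothesis (ii).

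For concentration I would bound the variance by a direct second-moment computation. The fourth-order moments of the uniform law on the sphere,
\begin{equation*}
\mathbb E\, u_i u_j u_p u_q = \frac{\delta_{ij}\delta_{pq} + \delta_{ip}\delta_{jq} + \delta_{iq}\delta_{jp}}{N(N+2)},
\end{equation*}
together with the symmetry $\boldsymbol A^k = (\boldsymbol A^k)^T$, contract the expansion $\mathbb E X_k^2 = \sum_{i,j,p,q} A^k_{ij} A^k_{pq}\, \mathbb E u_iu_ju_pu_q$ into
\begin{equation*}
\mathbb E X_k^2 = \frac{\mathbb E(\operatorname{tr}\boldsymbol A^k)^2 + 2\, \mathbb E \operatorname{tr}(\boldsymbol A^{2k})}{N(N+2)}.
\end{equation*}
Since $\mathbb E\operatorname{tr}(\boldsymbol A^{2k}) = O(N)$, the second summand is $O(1/N)$. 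For the first, I would invoke the standard Wigner estimate $\operatorname{Var}(\operatorname{tr}\boldsymbol A^k) = O(1)$, combine it with $(\mathbb E X_k)^2 = (\mathbb E \operatorname{tr}\boldsymbol A^k)^2/N^2$, and extract $\operatorname{Var}(X_k) = O(1/N)$. Chebyshev's inequality then delivers $X_k \to \lim \mathbb E X_k$ in probability, matching the claimed limit.

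The hard part is the variance bound $\operatorname{Var}(\operatorname{tr}\boldsymbol A^k) = O(1)$. Expanding $(\operatorname{tr}\boldsymbol A^k)^2$ as a sum over ordered pairs of closed walks of length $k$ in $[N]$, the pairs whose edge-multisets are disjoint reproduce $(\mathbb E \operatorname{tr}\boldsymbol A^k)^2$ exactly (by independence) and cancel; only the ``edge-sharing'' pairs contribute to the variance, and their count must be shown to be $O(1)$ via Wigner's pairing combinatorics together with the subGaussian bound (iv). The rest of the argument is a routine application of the moment method and the isotropy of the uniform sphere distribution.
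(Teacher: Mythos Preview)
Your proposal is correct and follows essentially the same moment-method route as the paper: compute the mean via isotropy and $\mathbb E\,\boldsymbol u^*\boldsymbol A^k\boldsymbol u = N^{-1}\mathbb E\operatorname{tr}\boldsymbol A^k$, control the second moment through the fourth moments of $\boldsymbol u$ to reduce to $(\operatorname{tr}\boldsymbol A^k)^2$ and $\operatorname{tr}\boldsymbol A^{2k}$, invoke Wigner's variance estimate for traces, and finish with Chebyshev. The only cosmetic difference is that you use the exact spherical contraction $\mathbb E\, u_iu_ju_pu_q = (\delta_{ij}\delta_{pq}+\delta_{ip}\delta_{jq}+\delta_{iq}\delta_{jp})/(N(N+2))$, whereas the paper proves a separate Gaussian-comparison lemma to upper-bound $\mathbb E\,u_i^2u_j^2$ and then arrives at the same two trace terms.
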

\begin{proof}
See Appendix \ref{app:uau}.
\end{proof}

Before phase transition $\lambda_i$'s are in the bulk spectrum
with spacing of $\mathcal O(\frac{1}{N})$. Assuming that the
expected values of the inner products in \eqref{ubku} is a smooth
function of the eigenvalues, we will have the following Lemma:
\begin{lemma}
\label{lem:ubu_int} the quadratic form in \eqref{ubku} converges
in probability to its mean value:
\begin{equation}
\label{ubu_int} \boldsymbol u^* \boldsymbol B^k u \xrightarrow{\;
\; \textnormal p \;\;} \int \mu_{\textnormal {sc}} (x) x^k
p(x;\theta) \textnormal dx
\end{equation}
in which
\begin{equation}
\label{p_lem} p(x;\theta) \mathrel{\mathop :}= N \mathbb E \{
\langle \boldsymbol v(x) , \boldsymbol u \rangle ^2 | x \}
\end{equation}
\end{lemma}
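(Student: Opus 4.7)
My plan is to view the quadratic form \eqref{ubku} as the $k$\textsuperscript{th} moment of a random probability measure on the spectrum, and to establish its convergence in two independent steps: convergence of the (conditional) mean to the stated integral, and concentration of $\boldsymbol u^* \boldsymbol B^k \boldsymbol u$ around that deterministic target. Introduce the random probability measure
\begin{equation*}
\nu_N := \sum_{i=1}^N \langle \boldsymbol v_i, \boldsymbol u \rangle^2 \, \delta_{\lambda_i},
\end{equation*}
which is a probability measure since $\sum_i \langle \boldsymbol v_i, \boldsymbol u \rangle^2 = \|\boldsymbol u\|^2 = 1$, so that $\boldsymbol u^* \boldsymbol B^k \boldsymbol u = \int x^k \, \textnormal d\nu_N(x)$.

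Applying the tower property together with definition \eqref{p_lem} yields $\mathbb E[\langle \boldsymbol v_i,\boldsymbol u\rangle^2 \mid \lambda_i] = p(\lambda_i;\theta)/N$, hence
\begin{equation*}
\mathbb E[\boldsymbol u^* \boldsymbol B^k \boldsymbol u] = \mathbb E \!\int\! x^k\, p(x;\theta)\, \textnormal d\mu_N(x).
\end{equation*}
Under the smoothness hypothesis on $p(\cdot;\theta)$, the test function $x^k p(x;\theta)$ is continuous and bounded on the bulk $[-2,2]$, and since the rank-one perturbation leaves the limiting empirical spectral distribution $\mu_{\textnormal{sc}}$ unchanged, cf.\ \eqref{weak}, this passes to the limit and gives $\mathbb E[\boldsymbol u^* \boldsymbol B^k \boldsymbol u] \to \int x^k\, p(x;\theta)\, \mu_{\textnormal{sc}}(x)\, \textnormal dx$.

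To upgrade mean convergence to convergence in probability, I exploit the independence of $\boldsymbol A$ and $\boldsymbol u$ in \eqref{model} and expand $\boldsymbol u^* (\boldsymbol A + \theta \boldsymbol{uu}^*)^k \boldsymbol u$ into the $2^k$ words of the non-commutative binomial. A generic word has the form $\boldsymbol u^* \boldsymbol A^{m_1}(\boldsymbol{uu}^*)\boldsymbol A^{m_2}\cdots(\boldsymbol{uu}^*)\boldsymbol A^{m_{\ell+1}}\boldsymbol u \cdot \theta^\ell$, and the identities $\boldsymbol u^*(\boldsymbol{uu}^*) = \boldsymbol u^*$ and $(\boldsymbol{uu}^*)\boldsymbol u = \boldsymbol u$ collapse every such word into a scalar product
\begin{equation*}
\theta^\ell\, \prod_{i=1}^{\ell+1} \bigl(\boldsymbol u^* \boldsymbol A^{m_i} \boldsymbol u\bigr), \qquad \ell + \sum_{i} m_i = k.
\end{equation*}
Lemma \ref{lem:uau} shows each factor $\boldsymbol u^* \boldsymbol A^{m_i} \boldsymbol u$ converges in probability to the Catalan constant $c_{m_i/2}$ (or $0$ when $m_i$ is odd), so Slutsky's theorem applied to the finite sum of finite products yields that $\boldsymbol u^* \boldsymbol B^k \boldsymbol u$ concentrates at a deterministic limit; uniqueness of the mean limit then identifies that limit with the stated integral.

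The main obstacle I anticipate is justifying the uniform regularity of the pre-limit weight $p(x;\theta) = N\,\mathbb E\{\langle \boldsymbol v(x),\boldsymbol u\rangle^2 \mid x\}$ across the bulk, which is implicit in the smoothness hypothesis stated just before the lemma; without it one cannot legitimately treat $\frac{1}{N}\sum_i \lambda_i^k p(\lambda_i;\theta)$ as a Riemann sum approximating the limiting integral against $\mu_{\textnormal{sc}}$. Granting the regularity, the remaining ingredients, namely the Wigner weak convergence \eqref{weak} and the combinatorial concentration of Lemma \ref{lem:uau}, combine in the standard Wigner trace-method fashion to close the proof.
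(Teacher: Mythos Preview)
Your proposal is correct and follows essentially the same route as the paper: the mean is identified via the smoothness hypothesis on $p(\cdot;\theta)$ combined with weak convergence of the empirical spectral measure to $\mu_{\textnormal{sc}}$, and concentration is obtained by expanding the non-commutative binomial $(\boldsymbol A+\theta\boldsymbol{uu}^*)^k$ into a finite sum of products $\theta^\ell\prod_i(\boldsymbol u^*\boldsymbol A^{m_i}\boldsymbol u)$ and invoking Lemma~\ref{lem:uau} on each factor. The only cosmetic difference is that the paper carries out the expansion on the \emph{square} $(\boldsymbol u^*\boldsymbol B^k\boldsymbol u)^2$ to show the second moment matches the squared mean, whereas you apply Slutsky directly to the first-power expansion; your version is slightly more economical but relies on the identical ingredients.
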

\begin{proof}
See Appendix \ref{app:ubu_int}.
\end{proof}

The distribution function $p(x;\theta)$ is assumed to be a smooth
function of $x$ and $\theta$. Therefore, it has a Taylor series
with respect to $\theta$:
\begin{equation}
\label{p_taylor} p(x;\theta) = \sum_{k=0}^{\infty} \theta^k f_k(x)
\end{equation}
Combinatorial calculations show that $f_k(x)$ are Chebyshev
polynomials of second kind. These polynomials form an orthogonal
polynomial set with respect to the weight function of the
semicircle law $\mu_{\textnormal {sc}}(x)$, in the interval $[-2
\, , +2]$. The first few functions are:
\begin{eqnarray}
\label{fk_ex} f_0(x) & = & 1 \nonumber \\
f_1(x) & = & x \nonumber \\
f_2(x) & = & x^2-1 \nonumber \\
f_3(x) & = & x^3-2x \nonumber \\
f_4(x) & = & x^4-3x^2+1
\end{eqnarray}

\begin{lemma}
\label{lem:chebyshev} Polynomials $f_k(x)$ of the Taylor series
expansion of the distribution function $p(x;\theta)$ are described
as:
\begin{equation}
\label{f_cheb} f_k(x) = U_k \left(\frac{x}{2} \right)
\end{equation}
in which $U_k(x)$ is the Chebyshev polynomial of second kind.
\end{lemma}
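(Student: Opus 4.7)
The plan is to compute the Taylor coefficients $f_\ell$ from the combinatorial expansion of $\boldsymbol u^*\boldsymbol B^k\boldsymbol u$ and to identify them with $U_\ell(x/2)$ using the fact that $\{U_\ell(x/2)\}_{\ell\geq 0}$ is an orthonormal basis for $L^2(\mu_{\textnormal{sc}})$. First, expanding $\boldsymbol B^k=(\boldsymbol A+\theta\boldsymbol u\boldsymbol u^*)^k$ and using $\boldsymbol u^*\boldsymbol u=1$ to bracket each factor $\theta\boldsymbol u\boldsymbol u^*$ into scalar quantities gives
$$\boldsymbol u^*\boldsymbol B^k\boldsymbol u = \sum_{\ell=0}^k \theta^\ell \!\!\sum_{\substack{k_0+\cdots+k_\ell=k-\ell\\ k_i\geq 0}}\prod_{i=0}^\ell \boldsymbol u^*\boldsymbol A^{k_i}\boldsymbol u.$$
By Lemma \ref{lem:uau}, any term with an odd $k_i$ vanishes in probability while $\boldsymbol u^*\boldsymbol A^{2m_i}\boldsymbol u$ concentrates at $c_{m_i}$, giving
$$\boldsymbol u^*\boldsymbol B^k\boldsymbol u \xrightarrow{\;\textnormal p\;} \sum_\ell \theta^\ell G_{k,\ell}, \quad G_{k,\ell}\mathrel{\mathop:}= \!\!\sum_{\substack{m_0+\cdots+m_\ell\\=(k-\ell)/2}}\prod_i c_{m_i},$$
with the convention $G_{k,\ell}=0$ whenever $\ell>k$ or $k-\ell$ is odd.

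Comparing this limit to Lemma \ref{lem:ubu_int} applied to the Taylor expansion $p(x;\theta)=\sum_\ell \theta^\ell f_\ell(x)$ and matching coefficients of $\theta^\ell$ yields the moment identities
$$\int \mu_{\textnormal{sc}}(x)\, x^k f_\ell(x)\, \textnormal dx = G_{k,\ell}, \qquad k,\ell \geq 0.$$
Since polynomials are dense in $L^2(\mu_{\textnormal{sc}})$ (the support being the compact interval $[-2,2]$), these countably many moments uniquely determine $f_\ell$; it therefore suffices to verify that $U_\ell(x/2)$ satisfies the same moment formula.

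To this end I would substitute $x=2\cos\phi$, turning $\mu_{\textnormal{sc}}(x)\,\textnormal dx$ into $\frac{2}{\pi}\sin^2\phi\,\textnormal d\phi$ and $U_\ell(x/2)$ into $\sin((\ell+1)\phi)/\sin\phi$. Combining $\sin\phi\sin((\ell+1)\phi)=\frac{1}{2}[\cos(\ell\phi)-\cos((\ell+2)\phi)]$ with the Fourier expansion $(2\cos\phi)^k=\sum_r\binom{k}{r}e^{i(k-2r)\phi}$ collapses the integral to
$$\int \mu_{\textnormal{sc}}(x)\, x^k U_\ell(x/2)\, \textnormal dx = \binom{k}{(k-\ell)/2}-\binom{k}{(k-\ell)/2-1}.$$
On the other side, the Catalan generating-function identity $C(t)^{\ell+1}=\sum_n \frac{\ell+1}{2n+\ell+1}\binom{2n+\ell+1}{n}t^n$ yields $G_{k,\ell}=\frac{\ell+1}{k+1}\binom{k+1}{(k-\ell)/2}$. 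The two expressions coincide via the elementary identity $\binom{k}{r}-\binom{k}{r-1}=\frac{k-2r+1}{k+1}\binom{k+1}{r}$ applied at $r=(k-\ell)/2$ (so that $k-2r+1=\ell+1$), completing the identification $f_\ell(x)=U_\ell(x/2)$.

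The main obstacle is the recognition step: evaluating both the Chebyshev moment integral and the Catalan convolution $G_{k,\ell}$ in closed form, then matching them through the above binomial identity. A secondary technical concern is the \emph{joint} concentration of the product $\prod_i \boldsymbol u^*\boldsymbol A^{k_i}\boldsymbol u$ (rather than only the marginal concentration of each factor given by Lemma \ref{lem:uau}), which I would handle through the continuous mapping theorem once each factor converges in probability to a deterministic limit.
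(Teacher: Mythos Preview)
Your argument is correct and takes a genuinely different route from the paper. The paper proceeds by induction: it computes $f_0=1$ and $f_1=x$ directly, then defines $H(m,n)$ as the $\theta^n$-coefficient of $\boldsymbol u^*\boldsymbol B^m\boldsymbol u$ and establishes the shifted recurrence $H(m,n)=H(m+1,n-1)-H(m,n-2)$ by interpreting $H(m,n)$ as the number of lattice paths consisting of Dyck segments interleaved with $n$ horizontal steps, recasting these as two-coloured planar trees, and verifying the recurrence algebraically on the bivariate generating function $W(x,y)=\bigl(1-y-\tfrac12(1-\sqrt{1-4x})\bigr)^{-1}$; this yields $f_n=xf_{n-1}-f_{n-2}$, the Chebyshev recurrence. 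You bypass the recurrence entirely: you evaluate the moment $\int x^k U_\ell(x/2)\,\textnormal d\mu_{\textnormal{sc}}$ trigonometrically as $\binom{k}{(k-\ell)/2}-\binom{k}{(k-\ell)/2-1}$, recognise the Catalan convolution $G_{k,\ell}$ as the ballot number $\frac{\ell+1}{k+1}\binom{k+1}{(k-\ell)/2}$ via $C(t)^{\ell+1}$, and match the two by the one-line binomial identity you state. This is shorter and more elementary, at the price of importing two off-the-shelf closed forms; the paper's route is longer but self-contained and makes structurally visible \emph{why} the Chebyshev recurrence appears. Your uniqueness step (density of polynomials in $L^2(\mu_{\textnormal{sc}})$) is also a little cleaner than the paper's, which tacitly assumes each $f_\ell$ has a power-series expansion in $x$ and solves for the coefficients term by term. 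The joint-concentration issue you flag is handled in the paper exactly as you propose, via products of individually convergent factors.
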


\begin{proof}
See Appendix \ref{app:cheb}.
\end{proof}

Therefore, the distribution function $p(x;\theta)$ is the ordinary
generating function of Chebyshev polynomials of second kind which
is known to be \cite{Casarano}:
\begin{equation}
\label{p_gen} p(x;\theta) = \sum_{k=0}^\infty \theta^k U_k \left(
\frac {x}{2} \right) = \frac{1}{1-\theta x + \theta^2}
\end{equation}
for $|\theta| < 1$ which is equivalent to \eqref{p(x,theta)th}.

\section{Proof of Theorem \ref{th:sum}}
\label{app:sum} To show convergence of the summation to its limit
in probability, we first show that the limit is the expected value
of the sum and then investigate the second moment.
\begin{lemma}
Under the assumptions of Theorem \ref{th:sum}:
\begin{equation}
\label{Esum} \mathbb E \sum_{i=1}^{cN} \langle \boldsymbol v_i ,
\boldsymbol u \rangle ^2 = \mathcal P(\theta;c)
\end{equation}
\end{lemma}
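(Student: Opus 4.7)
The plan is to connect the discrete sum of expected eigenvector correlations to the integral $\mathcal{P}(\theta;c)$ by combining Theorem \ref{th:p(x,t)} with the convergence of the empirical spectral measure of $\boldsymbol{B}$ to the semicircle law. First I would exchange expectation and the finite sum and condition on the spectrum $\{\lambda_i\}$, so that each term becomes $\mathbb{E}[\langle \boldsymbol{v}(\lambda_i),\boldsymbol{u}\rangle^2 \mid \lambda_i]$. By Theorem \ref{th:p(x,t)} each such conditional expectation is asymptotically $\frac{1}{N}\,p(\lambda_i;\theta)$.

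Next I would recast the resulting sum as a Riemann-type integral against the empirical spectral measure $\mu_N$ restricted to the top $cN$ eigenvalues:
\begin{equation*}
\sum_{i=1}^{cN} \mathbb{E}\langle \boldsymbol{v}_i,\boldsymbol{u}\rangle^2 \;\approx\; \int p(x;\theta)\,\mathbb{I}_{[m_N,\,\lambda_1]}(x)\,\mu_N(dx),
\end{equation*}
where $m_N$ is the empirical $(1-c)$-quantile of the spectrum. Because a fixed rank perturbation does not alter the bulk, \eqref{weak} gives $\mu_N \Rightarrow \mu_{\textnormal{sc}}$, and by the definition \eqref{m} the cutoff $m_N \to m$. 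Passing to the limit then yields $\int_m^2 \mu_{\textnormal{sc}}(x)\,p(x;\theta)\,dx = \mathcal{P}(\theta;c)$, which is the claimed identity.

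The main obstacle I expect is justifying the replacement of $\mathbb{E}\langle \boldsymbol{v}(\lambda_i),\boldsymbol{u}\rangle^2$ by $\frac{1}{N} p(\lambda_i;\theta)$ \emph{uniformly} across the $cN$ summands rather than only pointwise in $x$ as Theorem \ref{th:p(x,t)} provides. Pre-phase-transition ($\theta<1$) the pole of $p$ sits at $\rho_\theta=\theta+1/\theta>2$, so $p(\,\cdot\,;\theta)$ is bounded and Lipschitz on the compact interval $[m,2]$; together with the $\mathcal{O}(1/N)$ bulk level spacing, the accumulated Riemann-sum error is $o(1)$, and Lebesgue-type convergence for the smooth bounded test function $p(\,\cdot\,;\theta)\mathbb{I}_{[m,2]}$ closes the argument. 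In the $\theta\geqslant 1$ regime the outlier $\lambda_1 \to \rho_\theta$ must be peeled off and handled separately via Theorem \ref{th:benaych}, after which the remaining $cN-1$ terms satisfy the same uniform estimate and the same limit identification goes through.
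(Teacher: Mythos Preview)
Your proposal is correct and follows essentially the same route as the paper: rewrite the partial sum via an indicator on the top $cN$ eigenvalues, condition on the spectrum to replace each term by $\frac{1}{N}p(\lambda_i;\theta)$, and then pass from the empirical spectral measure to $\mu_{\textnormal{sc}}$ using the quantile convergence $m_N\to m$. Your discussion of the uniformity issue and the separate peeling of the outlier for $\theta\geqslant 1$ adds rigor that the paper's own argument leaves implicit.
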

\begin{proof}
 The sum in \eqref{Esum} can be converted to:
\begin{equation}
\label{sum_ind} \sum_{i=1}^{cN} \langle \boldsymbol v_i ,
\boldsymbol u \rangle ^2 = \sum_{i=1}^{N} \langle \boldsymbol v_i
, \boldsymbol u \rangle ^2 \, \mathbb I(\lambda_i \geqslant
\lambda_{cN})
\end{equation}
in which $\mathbb I(\cdot)$ is the indicator function. Whatever
$\theta$ is, $\lambda_{cN}$ converges in probability to $m$ which
is defined implicitly by \eqref{m}. Using the techniques of the
proof of Lemma \ref{lem:Eubu} in Appendix \ref{app:ubu_int}, the
expected value of \eqref{sum_ind} will be
\begin{align}
\mathbb E \, \sum_{i=1}^{cN} \langle \boldsymbol v_i , \boldsymbol
u \rangle ^2 & \xrightarrow{\; \; \textnormal p \;\;} \frac{1}{N}
\sum_{i=1}^{N} \, \mathbb E \, \{ \mathbb I(\lambda_i \geqslant m)
\, \mathbb E \, \{ N \langle \boldsymbol v_i , \boldsymbol u
\rangle ^2 | \lambda_i \} \}
\\
& = \quad \frac{1}{N} \sum_{i=1}^{N}  \, \mathbb E \, \{ \mathbb
I(\lambda_i \geqslant m) \, p(\lambda_i ; \theta) \}
\\
\label{sig-int} & \xrightarrow{\; \; \textnormal p \;\;} \int
\mu_{\textnormal {sc}} (x) \, \mathbb I(x \geqslant m) \,
p(x;\theta) \textnormal
dx \\
& = \quad \int_m^2 \mu_{\textnormal {sc}} (x) \, p(x;\theta)
\textnormal dx = \mathcal P (\theta;c)
\end{align}
where in \eqref{sig-int} we have used the fact that
asymptotically, the empirical measure of eigenvalues converges
weakly in probability to the semi-circle law. Note that the above
results are valid only for $c < 1$.
\end{proof}

Lemma \ref{lem:ubu_int} states that $\boldsymbol u^* \boldsymbol
B^k \boldsymbol u$ converges in probability to its mean. Define
the empirical probability measure and its mean as:
\begin{align}
L_N \mathrel{\mathop :} &= \sum_{i=1}^N \delta _{\lambda_i}
\langle \boldsymbol v_i ,
\boldsymbol u \rangle ^2 \\
\langle \bar{L}_N , f \rangle \mathrel{\mathop :} &= \mathbb E \,
\langle L_N , f \rangle  \quad \forall f \in C_b
\end{align}
Then, $\boldsymbol u^* \boldsymbol B^k \boldsymbol u = \langle L_N
, x^k \rangle$ and Lemma \ref{lem:ubu_int} asserts that
\begin{align}
\lim_{N \to \infty} \langle L_N , x^k \rangle = \langle \,
\mu_{\textnormal {sc}}(x) \, p(x;\theta) , x^k \rangle
\end{align}
Although function $\mathbb I(x \geqslant m)$ is not continuous, a
deliberately exact approximation of it can be formed by a
truncated Taylor series expansion in the interval $[-2 , +2]$.
Therefore we can conclude that
\begin{align}
\lim_{N \to \infty} & \sum_{i=1}^{cN} \langle \boldsymbol v_i ,
\boldsymbol u \rangle ^2 = \lim_{N \to \infty} \langle L_N ,
\mathbb I(x \geqslant m) \rangle  = \\
& \langle \, \mu_{\textnormal {sc}}(x) \, p(x;\theta) , \mathbb
I(x \geqslant m) \rangle = \mathcal P (\theta;c)
\end{align}
which is the result of Theorem \ref{th:sum}.

\section{Proof of Lemma \ref{lem:uau}}
\label{app:uau} We show that $\boldsymbol u^* \boldsymbol A^k
\boldsymbol u$ concentrates around its mean:
\begin{eqnarray}
\label{Euau} \mathbb E \, \boldsymbol u^* \boldsymbol A^k
\boldsymbol u = \mathbb E \, \textnormal {Tr} \, (\boldsymbol u^*
\boldsymbol A^k \boldsymbol u) = \mathbb E \, \textnormal {Tr} \,
( \boldsymbol A^k \boldsymbol u \boldsymbol u^* ) = \nonumber \\
\mathbb E_{\boldsymbol A} \, \mathbb E_{\boldsymbol u} \{
\textnormal {Tr} \, (\boldsymbol A^k \boldsymbol {uu}^* ) \, |
\boldsymbol A \} = \mathbb E_{\boldsymbol A} \, \textnormal {Tr}
\, (\boldsymbol A^k \, \mathbb E_{\boldsymbol u} \boldsymbol
{uu}^* ) \nonumber \\
= \mathbb E \, \textnormal {Tr} \, (\boldsymbol A^k
\frac{1}{N}\boldsymbol I_N) =  \mathbb E \, \frac{1}{N}
\textnormal {Tr} \boldsymbol A^k
\end{eqnarray}
which is $c_{k/2}$ when $k$ is even and $0$ otherwise. In
\eqref{Euau} we have used the fact that a random vector uniformly
distributed on the unit sphere $\mathbb S^{N-1}$ is isotropic
\cite{Vershynin} and therefore $\mathbb E \, \boldsymbol {uu}^* =
\frac{1}{N}\boldsymbol I_N$.

Now we give a Gaussian comparison and upper bound for product
moments of $\boldsymbol u$:

\begin{lemma}
\label{lem:comparison} if $\boldsymbol u \sim \textnormal{unif}
(\mathbb S^{N-1})$ and $\boldsymbol x \sim \mathcal N (0 ,
\frac{1}{N} \boldsymbol I_N)$ are two random vectors then:
\begin{equation}
\label{comp} \mathbb E \, u_i^2 u_j^2 \leq \mathbb E \, x_i^2
x_j^2 = \left \{
\begin{array}{lrl}
3/N^2  & \textnormal {if} & i=j \\
1/N^2  & \textnormal {if} & i\neq j
\end{array} \right.
\end{equation}
\end{lemma}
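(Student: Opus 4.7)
The plan is to exploit the standard Gaussian construction of the uniform measure on the sphere: if $\boldsymbol g \sim \mathcal N(0,\boldsymbol I_N)$, then $\boldsymbol u \stackrel{d}{=} \boldsymbol g / \|\boldsymbol g\|$, and, crucially, the direction $\boldsymbol g/\|\boldsymbol g\|$ and the length $\|\boldsymbol g\|$ are independent (a standard consequence of the rotational invariance of $\mathcal N(0,\boldsymbol I_N)$ together with the fact that $\|\boldsymbol g\|^2 \sim \chi^2_N$).

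First I would handle the Gaussian side, which is the easier direction. Since the components of $\boldsymbol x$ are independent $\mathcal N(0,1/N)$ random variables, I would just invoke $\mathbb E x_i^2 = 1/N$, $\mathbb E x_i^4 = 3/N^2$ (fourth moment of a standard normal scaled by $1/N$), and $\mathbb E x_i^2 x_j^2 = \mathbb E x_i^2\, \mathbb E x_j^2 = 1/N^2$ for $i\neq j$ by independence. This establishes the equality in \eqref{comp}.

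Next I would compute $\mathbb E u_i^2 u_j^2$ exactly by writing $g_i = \|\boldsymbol g\|\, u_i$ and using the independence of $\|\boldsymbol g\|$ and $\boldsymbol u$:
\begin{equation*}
\mathbb E g_i^2 g_j^2 \;=\; \mathbb E \|\boldsymbol g\|^4 \cdot \mathbb E u_i^2 u_j^2.
\end{equation*}
Since $\|\boldsymbol g\|^2 \sim \chi^2_N$ gives $\mathbb E \|\boldsymbol g\|^4 = N(N+2)$, and the left-hand side equals $3$ when $i=j$ and $1$ when $i\neq j$, I would conclude
\begin{equation*}
\mathbb E u_i^4 \;=\; \frac{3}{N(N+2)}, \qquad \mathbb E u_i^2 u_j^2 \;=\; \frac{1}{N(N+2)} \quad (i\neq j).
\end{equation*}

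The inequality in \eqref{comp} then follows at once from $N(N+2)\geq N^2$ for $N\geq 1$. There is no genuine obstacle here; the only subtle ingredient is the independence of the radial and angular parts of an isotropic Gaussian, and once that is invoked the argument reduces to the elementary computation of two chi-square and two Gaussian moments.
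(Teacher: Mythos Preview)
Your argument is correct and reaches the same exact values $\mathbb E\,u_i^4=3/(N(N+2))$ and $\mathbb E\,u_i^2u_j^2=1/(N(N+2))$ as the paper, hence the same inequality. The route, however, is genuinely different: the paper quotes the general product-moment formula for the uniform distribution on $\mathbb S^{N-1}$ from \cite{Fang},
\[
\mathbb E \prod_{i=1}^N u_i^{k_i} \;=\; \frac{\Gamma(N/2)}{2^{k}\,\Gamma((N+k)/2)}\prod_{i=1}^N \frac{k_i!}{(k_i/2)!},
\]
and then specializes it to the two cases needed. You instead exploit the Gaussian representation $\boldsymbol u \stackrel{d}{=}\boldsymbol g/\|\boldsymbol g\|$ and the independence of the radial and angular parts to write $\mathbb E\,g_i^2 g_j^2 = \mathbb E\,\|\boldsymbol g\|^4\cdot \mathbb E\,u_i^2 u_j^2$, reducing everything to the elementary identities $\mathbb E\,g_i^4=3$, $\mathbb E\,g_i^2 g_j^2=1$ $(i\neq j)$, and $\mathbb E\,\|\boldsymbol g\|^4=N(N+2)$ for $\|\boldsymbol g\|^2\sim\chi^2_N$. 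Your approach is more self-contained (no external reference) and arguably more transparent for these particular fourth-order moments; the paper's cited formula is more general and would scale immediately to higher product moments if they were needed.
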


\begin{proof}
Although $\boldsymbol u$ is a sub-Gaussian random vector
\cite{Vershynin}, this is not enough to infer \eqref{comp}.
Moments of sub-Gaussian random vectors are bounded above by
Gaussian moments times a constant while in \eqref{comp} the
constant is unity.

General product moments of uniform distribution on unit sphere is
derived in \cite{Fang}:
\begin{equation}
\label{prod} \mathbb E \prod_{i=1}^N u_i^{k_i} =
\frac{\Gamma(\frac{N}{2})}{2^k \, \Gamma(\frac{N+k}{2})} \,
\prod_{i=1}^{N} \frac{k_i!}{(\frac{k_i}{2})!}
\end{equation}
in which $k = \sum_{i=1}^N k_i$ and every $k_i$ should be an even
number. Therefore, for the four'th moment we will have:
\begin{equation}
\label{4moment} \mathbb E \, u_i^4 =
\frac{\Gamma(\frac{N}{2})}{2^4 \, \Gamma(\frac{N}{2}+2)} \,
\frac{4!}{2!} = \frac{3}{N^2+2N} \leqslant \frac{3}{N^2} = \mathbb
E \, x_i^4
\end{equation}
and the product moment $i \neq j$ will be bounded as:
\begin{equation}
\label{22moment} \mathbb E \, u_i^2 u_j^2 = \frac{1}{N^2+2N}
\leqslant \frac{1}{N^2} = \mathbb E \, x_i^2 x_j^2
\end{equation}
and the lemma is proved.
\end{proof}

Next, we examine the second moment of $\boldsymbol u^* \boldsymbol
A^k \boldsymbol u$ to show concentration around its mean. Define
$\boldsymbol F \mathrel{\mathop :} = \boldsymbol A^k$:
\begin{eqnarray}
\label{mom2} \mathbb E \, (\boldsymbol u^* \! \boldsymbol A^k
\boldsymbol u )^2 = \mathbb E \left( \sum_{i,j} u_i F_{ij} u_j
\right)^2 = \nonumber \\
\mathbb E \sum_{i,j,m,n} u_i u_j u_m u_n F_{ij} F_{mn}
\end{eqnarray}
Although elements of $\boldsymbol u$ are not independent random
variables, it can be shown that the expected value of any
combination of its elements with odd powers is zero due to the
symmetry in the sphere \cite{Vershynin}. The surviving terms are:
\begin{eqnarray}
\mathbb E \bigg( \sum_i u_i^4 F_{ii}^2 + \sum_{i,j \neq i}
u_i^2 u_j^2 F_{ii} F_{jj} + \nonumber \\
\quad \quad \sum_{i,j \neq i} u_i^2 u_j^2 F_{ij}^2 + \sum_{i,j
\neq i} u_i^2 u_j^2 F_{ij}^2 \bigg)
\end{eqnarray}
corresponding to situations where $(i=j=m=n)$, $(i=j , m=n)$ ,
$(i=m , j=n)$, and $(i=n , j=m)$ in \eqref{mom2}. We have also
used the fact that $\boldsymbol F$ is a symmetric matrix. Using
upper bounds in Lemma \ref{lem:comparison} for the expectation on
$\boldsymbol u$ we get:
\begin{eqnarray}
\mathbb E \, (\boldsymbol u^* \! \boldsymbol A^k \boldsymbol u )^2
\quad \quad \quad \quad \quad \quad \quad \quad \quad \quad \quad
\quad \quad \quad \quad \quad \nonumber \\
\leqslant \mathbb E \frac{1}{N^2} \bigg( 3 \sum_i F_{ii}^2 +
\sum_{i \neq j} F_{ii} F_{jj} + 2 \sum_{i \neq j} F_{ij}^2 \bigg)
\nonumber \\
= \mathbb E \frac{1}{N^2} \bigg( \sum_{i,j} F_{ii} F_{jj} + 2
\sum_{i,j} F_{ij}^2 \bigg) \quad \quad \quad \quad \nonumber \\
= \mathbb E \bigg( \frac{1}{N}\textnormal {Tr} \boldsymbol A^{k}
\bigg)^2 + \mathbb E \frac{2}{N^2} \, \textnormal {Tr} \boldsymbol
A^{2k} \quad \quad \quad \quad \nonumber \\
\xrightarrow {\;\; \textnormal p \;\;} \bigg( \mathbb E
\frac{1}{N}\textnormal {Tr} \boldsymbol A^{k} \bigg)^2 +
\frac{2}{N} c_k \quad \quad \quad \quad
\end{eqnarray}
in which, the convergence in probability is due to Wigner in its
proof of semi circle law \cite{Anderson}. Therefore:
\begin{equation}
\textnormal {Var} (\boldsymbol u^* \! \boldsymbol A^k \boldsymbol
u) \leqslant \frac{2}{N} c_k
\end{equation}
Now, a standard Chebychev inequality shows concentration around
the mean value in \eqref{Euau}:
\begin{equation}
\mathbb P (| \boldsymbol u^* \! \boldsymbol A^k \boldsymbol u -
\mathbb E \, \boldsymbol u^* \! \boldsymbol A^k \boldsymbol u |
\geqslant t ) \leqslant \frac{\frac{2}{N} c_k}{t^2} \xrightarrow
{N \to \infty} 0
\end{equation}
for each fixed $t$ and therefore, convergence in probability to
the mean value is proved.

\section{Proof of Lemma \ref{lem:ubu_int}}
\label{app:ubu_int} We will show that $\boldsymbol u^* \boldsymbol
B^k \boldsymbol u$ converges in probability to its expected value.
\begin{lemma}
\label{lem:Eubu} The expected value of the quadratic form is:
\begin{equation}
\label{Eubu} \mathbb E \, \boldsymbol u^* \boldsymbol B^k
\boldsymbol u = \int \mu_{\textnormal {sc}}(x) \, x^k p(x;\theta)
\, \textnormal d x
\end{equation}
in which
\begin{equation}
p(x;\theta) \mathop : = \lim_{N \to \infty}  \mathbb E\, \{ N
\langle \boldsymbol v_i , \boldsymbol u \rangle ^2 | \lambda_i = x
\}
\end{equation}
\end{lemma}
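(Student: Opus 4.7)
The plan is to expand $\boldsymbol{u}^* \boldsymbol{B}^k \boldsymbol{u}$ in the eigenbasis of $\boldsymbol{B}$ via \eqref{ubku}, take expectations, peel off the overlaps by conditioning on eigenvalues, and then recognize the resulting deterministic sum as a Riemann-style approximation of the integral against the semicircle density supplied by \eqref{weak}.

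First I would use \eqref{ubku} together with the tower property to write
\begin{equation}
\mathbb{E}\,\boldsymbol{u}^* \boldsymbol{B}^k \boldsymbol{u} = \sum_{i=1}^N \mathbb{E}\bigl\{\lambda_i^k\,\mathbb{E}\{\langle \boldsymbol{v}_i,\boldsymbol{u}\rangle^2 \mid \lambda_i\}\bigr\},
\end{equation}
and then replace the inner conditional expectation by $\tfrac{1}{N}\, p(\lambda_i;\theta)$ using the defining equation \eqref{p_lem}, at the cost of an additive $o(1/N)$ term per summand. This recasts the left-hand side as $\mathbb{E}\langle \mu_N^B,\; x^k p(x;\theta)\rangle + o(1)$, where $\mu_N^B := \tfrac{1}{N}\sum_i \delta_{\lambda_i}$ denotes the empirical spectral measure of $\boldsymbol{B}$.

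Next I would invoke weak convergence $\mu_N^B \Rightarrow \mu_{\textnormal{sc}}$. By Weyl interlacing, a rank-one additive perturbation shifts each sorted eigenvalue by at most one index, so $\mu_N^B$ and the empirical measure $\mu_N$ of $\boldsymbol{A}$ differ by $O(1/N)$ in L\'evy distance, and \eqref{weak} therefore transfers from $\boldsymbol{A}$ to $\boldsymbol{B}$. For $\theta<1$ the intended limit $p(x;\theta) = 1/[\theta(\rho_\theta - x)]$ is continuous and bounded on the semicircle support $[-2,+2]$ because $\rho_\theta>2$, so $g(x) := x^k p(x;\theta)$ is a bounded continuous test function on that support; standard weak convergence then yields \eqref{Eubu}.

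The main obstacle is the very first substitution: rigorously replacing $\mathbb{E}\{N\langle \boldsymbol{v}_i,\boldsymbol{u}\rangle^2 \mid \lambda_i\}$ by the smooth profile $p(\lambda_i;\theta)$ uniformly in $i$. This is precisely the smoothness and local-averaging assumption flagged in the paragraph preceding Lemma \ref{lem:ubu_int}: the conditional squared overlap, viewed as a function of eigenvalue location, must converge to a continuous deterministic profile at the local spacing scale $1/N$, so that the $o(1/N)$ per-summand correction is absorbed after summation and integration against $g$. Once this regularity is granted, the remainder is a routine weak-convergence step relying only on the boundedness of $p(\cdot;\theta)$ on $[-2,+2]$ in the sub-phase-transition regime.
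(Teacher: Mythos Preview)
Your overall structure---condition on $\lambda_i$, replace the conditional overlap by the profile $\tfrac{1}{N}p(\lambda_i;\theta)$, then pass to the semicircle limit---matches the paper's proof, and you correctly flag the same non-rigorous step (regularity of the conditional expectation as a function of eigenvalue location) that the paper also simply assumes.

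The execution of the final limit differs, and yours has a circularity that the paper's does not. The paper Taylor-expands $p(x;\theta)=\sum_\ell p_\ell(\theta)\,x^\ell$ in $x$ and reduces the right-hand side to empirical polynomial moments $\tfrac{1}{N}\sum_i \mathbb{E}\,\lambda_i^{k+\ell}$, each of which converges to the corresponding semicircle moment; this uses only the assumed smoothness of $p$, not its explicit form. You instead apply weak convergence directly to the test function $g(x)=x^k p(x;\theta)$, which is cleaner but requires $g$ to be bounded and continuous on $[-2,2]$. To secure that, you invoke the formula $p(x;\theta)=1/[\theta(\rho_\theta-x)]$---but that formula is exactly the conclusion of Theorem~\ref{th:p(x,t)}, whose proof rests on this lemma, so the boundedness justification is circular as written. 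To make your route self-contained you would need an a priori bound on $p(\cdot;\theta)$ over the bulk that does not presuppose \eqref{p(x,theta)th}; absent that, the paper's moment-by-moment reduction is the safer way to pass to the limit.
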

\begin{proof}
Left Hand Side of \eqref{Eubu} can be written as:
\begin{eqnarray}
\mathbb E \, \sum_{i=1}^{N} \langle \boldsymbol v_i , \boldsymbol
u \rangle ^2 \lambda_i^k =  \frac{1}{N} \sum_{i=1}^N \mathbb E \,
\{ \lambda_i^k \, \mathbb E\, \{ N \langle \boldsymbol v_i ,
\boldsymbol u \rangle ^2 | \lambda_i \} \}
\end{eqnarray}
We assume that the inner expectation is a smooth function
$p(\lambda_i;\theta)$ with a Taylor series expansion $\sum_\ell
\lambda_i^\ell p_\ell(\theta)$. Therefore, we reach to
\begin{equation}
\label{Epl} \frac{1}{N} \sum_{i,\ell} \mathbb E \,
\lambda_i^{k+\ell} p_\ell(\theta) = \sum_\ell p_\ell(\theta)
\frac{1}{N} \sum_{i=1}^N \mathbb E \, \lambda_i^{k+\ell}
\end{equation}
It is known that whatever $\theta$ is, the limiting behavior of
the eigenvalues of $\boldsymbol B$ obeys the semi-circle law
\cite{Feral}, since the deformation is of finite rank and energy.
Therefore, the right-hand-side in \eqref{Epl} converges in
probability to
\begin{equation}
\sum_\ell p_\ell(\theta) \int \mu_{\text {sc}}(x) \,  x^{k+\ell}
\textnormal dx
\end{equation}
and we will have
\begin{equation}
\mathbb E \, \boldsymbol u^* \boldsymbol B^k \boldsymbol u = \int
\mu_{\text {sc}}(x) \, x^k p(x;\theta) \, \textnormal d x
\end{equation}
\end{proof}

To show convergence to the mean value, it will be sufficient to
show the same for the second moment of the quadratic form:

\begin{lemma}
The second moment of the quadratic form converges in probability
to square of its mean value:
\begin{equation}
\label{Eubu2} \mathbb E \, ( \boldsymbol u^* \boldsymbol B^k
\boldsymbol u )^2 \xrightarrow {\quad \textnormal p \quad } ( \,
\mathbb E \, \boldsymbol u^* \boldsymbol B^k \boldsymbol u )^2
\end{equation}
\end{lemma}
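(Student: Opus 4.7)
Both sides of \eqref{Eubu2} are deterministic sequences in $N$, so the stated convergence is equivalent to $\mathrm{Var}(\boldsymbol u^* \boldsymbol B^k \boldsymbol u) \to 0$ as $N \to \infty$; I will prove this directly. The key tool is the non-commutative binomial expansion of $\boldsymbol B^k = (\boldsymbol A + \theta \boldsymbol{uu}^*)^k$: each monomial in the expansion is a product of $k$ factors, each either $\boldsymbol A$ or $\theta \boldsymbol{uu}^*$. Conjugating by $\boldsymbol u$ on both sides and collapsing every internal $\boldsymbol{uu}^*$ via $\boldsymbol u^* \boldsymbol u = 1$, one obtains
\[ \boldsymbol u^* \boldsymbol B^k \boldsymbol u = \sum_{m=0}^k \theta^m \sum_{\substack{a_0,\ldots,a_m \geq 0 \\ a_0+\cdots+a_m = k-m}} \prod_{i=0}^m \boldsymbol u^* \boldsymbol A^{a_i} \boldsymbol u. \]
Hence $\boldsymbol u^* \boldsymbol B^k \boldsymbol u$ is a fixed (i.e. $k$-dependent, $N$-independent) linear combination of products of scalar quantities of exactly the form treated in Lemma \ref{lem:uau}.

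By Lemma \ref{lem:uau}, each factor $\boldsymbol u^* \boldsymbol A^{a_i} \boldsymbol u$ converges in probability to $c_{a_i/2}$ when $a_i$ is even and to $0$ when $a_i$ is odd, with variance of order $1/N$. A union bound turns this into joint convergence in probability of the finite-length vector of factors to a constant vector; the continuous mapping theorem then yields convergence in probability of each product $\prod_i \boldsymbol u^* \boldsymbol A^{a_i} \boldsymbol u$ to a constant, and summing the finitely many such products shows that $\boldsymbol u^* \boldsymbol B^k \boldsymbol u$ itself converges in probability to a deterministic limit, which matches the integral formula of Lemma \ref{lem:Eubu} for $\lim_N \mathbb E \, \boldsymbol u^* \boldsymbol B^k \boldsymbol u$.

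The main technical obstacle is upgrading convergence in probability to convergence in $L^2$, which is what is actually needed to conclude $\mathbb E (\boldsymbol u^* \boldsymbol B^k \boldsymbol u)^2 \to (\mathbb E \, \boldsymbol u^* \boldsymbol B^k \boldsymbol u)^2$. For this I plan to reuse the Gaussian comparison of Lemma \ref{lem:comparison} and the trace concentration already established in Appendix \ref{app:uau} to bound each $\boldsymbol u^* \boldsymbol A^{a_i} \boldsymbol u$ uniformly in $L^p$ for some $p > 2$; the bound $\mathbb E (\boldsymbol u^* \boldsymbol A^{a_i} \boldsymbol u)^2 = O(1)$ is already obtained in that appendix, and the analogous $L^p$ bound follows from the same scheme using the higher-order product-moment formula \eqref{prod} for uniform $\boldsymbol u$ on the sphere together with the subGaussian moment bound (iv) applied to $\tfrac{1}{N} \mathrm{Tr}\, \boldsymbol A^{pa_i}$. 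Generalised H\"older then transfers these bounds to each finite product and hence to $(\boldsymbol u^* \boldsymbol B^k \boldsymbol u)^2$, giving uniform integrability. Combined with convergence in probability this yields convergence in $L^1$, so $\mathrm{Var}(\boldsymbol u^* \boldsymbol B^k \boldsymbol u) \to 0$, which is \eqref{Eubu2}.
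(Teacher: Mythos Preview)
Your proposal follows essentially the same route as the paper: expand $(\boldsymbol u^*\boldsymbol B^k\boldsymbol u)^2$ via the non-commutative binomial, collapse the internal $\boldsymbol{uu}^*$ factors, and reduce to finite sums of products of the scalars $\boldsymbol u^*\boldsymbol A^{a_i}\boldsymbol u$, then invoke Lemma~\ref{lem:uau} on each factor. The paper's proof stops at ``each factor converges in probability, hence the product does, hence the result''; you go further and explicitly treat the uniform-integrability step needed to pass from convergence in probability of the random variable $\boldsymbol u^*\boldsymbol B^k\boldsymbol u$ to convergence of its first and second moments, which the paper leaves implicit. Your approach is therefore the same in spirit but more complete in execution.
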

\begin{proof}
expanding the terms of quadratic form we have:
\begin{equation}
\label{ubu2} ( \boldsymbol u^* \boldsymbol B^k \boldsymbol u )^2 =
\boldsymbol u^* (\boldsymbol A + \theta \boldsymbol {uu}^*)^k
\boldsymbol u \boldsymbol u^* (\boldsymbol A + \theta \boldsymbol
{uu}^*)^k \boldsymbol u
\end{equation}
Using $\boldsymbol u^* \boldsymbol u = 1$, \eqref{ubu2}  reduces
to a summation of product forms $\theta ^{k_0} ( \boldsymbol u^*
\boldsymbol A^{k_1} \boldsymbol u ) \cdots ( \boldsymbol u^*
\boldsymbol A^{k_\ell} \boldsymbol u )$. In Lemma \ref{lem:uau} we
have shown that each individual term converges in probability to
its mean value. Therefore, their product will also converge in
probability to the product of the mean values. The same is true
for the R.H.S. of \eqref{Eubu2} and therefore the L.H.S. converges
to the R.H.S. in probability.
\end{proof}

\section{Proof of Lemma \ref{lem:chebyshev}}
\label{app:cheb} We first derive $f_0(x)$ and $f_1(x)$ and then
show that $f_k(x)$ obey a recurrence equation which is
characteristic of the Chebyshev polynomials of second kind
\cite{Rivlin}.

\subsection{Calculating $f_0(x)$}
The inner product $\boldsymbol u^* \boldsymbol B^{2k} \boldsymbol
u = \boldsymbol u^* (\boldsymbol A + \theta \boldsymbol
{uu}^*)^{2k} \boldsymbol u$ admits a polynomial expansion in which
the $\theta^0$ term is equal to
\begin{equation}
\label{t0poly} \boldsymbol u^* \boldsymbol A^{2k} \boldsymbol u
\xrightarrow {\quad \textnormal p \quad } c_k \end{equation}
by Lemma \ref{lem:uau}. $\boldsymbol u^* \boldsymbol B^{2k}
\boldsymbol u$ also converges to an integral form which is stated
in Lemma \ref{lem:ubu_int} and gives rise to $\theta^0$ term of:
\begin{equation}
\label{t0} \int \mu_{\text {sc}}(x) \, x^{2k} f_0(x) \textnormal d
x.
\end{equation}
$2k^{\textnormal {th}}$ moments of the semicircle law is known to
be the Catalan number $c_k$ while the odd moments are zero.
Therefore, a Taylor expansion on $f_0(x)$:
\begin{equation}
\label{taylor-f} f_k(x) = \sum_{i=0}^\infty a_{ki} x^i
\end{equation}
applied in \eqref{t0} and equating to \eqref{t0poly} gives:
\begin{equation}
\label{f0even} a_{00}c_k + a_{02} c_{k+1} + a_{04} c_{k+2} +
\cdots \equiv c_k    \quad  \forall k \in \mathbb N
\end{equation}
and therefore, $a_{00}=1$ while $a_{0 (2k)}=0$ for all $k
\geqslant 1$. In the same manner $\boldsymbol u^* \boldsymbol
B^{2k+1} \boldsymbol u$ gives rise to $\theta^0$ term of
$\boldsymbol u^* \boldsymbol A^{2k+1} \boldsymbol u \xrightarrow {
\; \textnormal p \;} 0$ which should be equal to the integral form
of the Taylor series:
\begin{equation}
\label{f0odd} a_{01}c_{k+1} + a_{03} c_{k+2} + a_{05} c_{k+3} +
\cdots \equiv 0    \quad  \forall k \in \mathbb N
\end{equation}
which leads to $a_{0(2k+1)} = 0$ and finally we will have:
\begin{equation}
\label{f0} f_0(x) = 1 = U_0 \left( \frac{x}{2} \right)
\end{equation}
in which, $ U_0(x)$ is the zero$^{\textnormal {th}}$ Chebyshev
polynomial of second kind.

\subsection{Calculating $f_1(x)$}
Calculating $f_1(x)$ amounts to the $\theta^1$ term of
$\boldsymbol u^* (\boldsymbol A + \theta \boldsymbol
{uu}^*)^{2k+1} \boldsymbol u$. The general term of this binomial
expansion with only one $\theta$ term is $(\boldsymbol u^*
\boldsymbol A^n \theta \boldsymbol u ) (\boldsymbol u^*
\boldsymbol A^{2k-n} \boldsymbol u ) \xrightarrow {\quad
\textnormal p \quad } c_{\frac{n}{2}} c_{k-\frac{n}{2}}$ when
$n=2m$ is an even number  $0 \leqslant n \leqslant 2k$ according
to Lemma \ref{lem:uau}. Therefore, sum of these general terms will
give the $\theta^1$ coefficient as:
\begin{equation}
\label{f1sum} \sum_{m=0}^k c_m c_{k-m} = c_{k+1}
\end{equation}
by the well-known recurrence of Catalan numbers \cite{Anderson}.
The integral form on the Taylor series expansion of $f_1(x)$ gives
the following equivalence:
\begin{equation}
\label{f1sum} a_{11} c_{k+1} + a_{13} c_{k+2} + a_{15} c_{k+3} +
\cdots \equiv c_{k+1}  \quad  \forall k \in \mathbb N
\end{equation}
which leads to $a_{11}=1$ and $a_{1(2k+1)}=0$ for all $k \geqslant
1$.

To determine even coefficients we consider $\boldsymbol u^*
(\boldsymbol A + \theta \boldsymbol {uu}^*)^{2k} \boldsymbol u$.
From $2k$ selections between $\boldsymbol A$ and $\theta
(\boldsymbol {uu}^*)$, one of them is $\theta$ and the general
term of interest is $(\boldsymbol u^* \boldsymbol A^n \theta
\boldsymbol u ) (\boldsymbol u^* \boldsymbol A^{2k-1-n}
\boldsymbol u )$ in which $0 \leqslant n \leqslant 2k-1$. Here it
is impossible that both $n$ and $2k-1-n$ be even numbers.
Therefore, the equivalence of the integral form and the
combinatorial term will be:
\begin{equation}
\label{f1even} a_{10}c_k + a_{12} c_{k+1} + a_{14} c_{k+2} +
\cdots \equiv 0    \quad  \forall k \in \mathbb N
\end{equation}
which gives $a_{1(2k)}=0$. Therefore, we will have:
\begin{equation}
\label{f1} f_1(x) = x = U_1 \left( \frac{x}{2} \right)
\end{equation}

\subsection{Recurrence of $f_n(x)$}
Chebyshev polynomials of second kind satisfy the following
recurrence equation:
\begin{equation}
\label{urec} U_n(x) = 2 \, x U_{n-1}(x) - U_{n-2}(x)
\end{equation}
Therefore, we should prove a similar recurrence on $f_n(x)$:
\begin{equation}
\label{frec} f_n(x) = x f_{n-1}(x) - f_{n-2}(x)
\end{equation}

Define $H(m,n)$ as the coefficient of $\theta^n$ in the inner
product $\boldsymbol u^* (\boldsymbol A + \theta \boldsymbol
{uu}^*)^m \boldsymbol u$. We can show a recurrence on $H(m,n)$:
\begin{equation}
\label{hrec} H(m,n) = H(m+1,n-1) - H(m,n-2)
\end{equation}
To show \eqref{hrec}, assume a sequence of elements $A$ and
$\theta$ with length $m$. We are interested in the sum of all
sequences with predetermined number of $\theta$'s while runs of
$A$ should be even and each run of $A^{2n}$ is translated to
$c_n$. For example:
\begin{equation}
\label{ex} AA \theta AAAA \theta AAAAAA \longrightarrow c_1 c_2
c_3
\end{equation}
Therefore, sum of all sequences with $n$ elements of $\theta$ and
total length $m$ is $H(m,n)$. To further translate the problem to
a combinatorial object enumeration, we use the fact that the
Catalan number $c_k$ counts the number of Dyck paths with length
$2k$. Dyck paths are bernouli $\pm 1$ random walks which are
always above the horizontal zero level and starting and ending in
zero level. Therefore, transformation \eqref{ex} amounts to
counting the number of paths with a Dyck path of length 2, then a
horizontal zero level (h) step forward, then a Dyck path of length
4 ($D_4$), h, and finally a $D_6$. Total number of such paths with
$n$ h-steps and total length $m$ is $H(m,n)$. Dyck paths are bound
to even length. For ease of notation, define $I_{mn}$ as the
number of paths with total length of Dyck paths $2m$ and number of
h-steps $n$. Note that h-steps can only occur in the zero level.
Fig. \ref{fig:dyck} shows an example of such a path.

\begin{figure}
\centering
\includegraphics[width = 0.45 \textwidth]{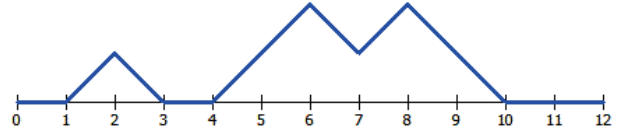}
\caption{A sample path with total length 12 from which 4 steps are
h-steps while 8 steps are Dyck $\pm 1$ steps. h-steps only occur
in the zero level. Number of all such paths with 4 h-steps and $2
\times 4$ Dyck steps are denoted as $I_{4,4}=H(12,4)$.}
\label{fig:dyck}
\end{figure}

Dyck paths are equivalent to planar trees. In fact each Dyck path
determines a unique planar tree by the following construction:
Start from zero level and add a root node. With each up step add a
new edge and the corresponding node to the tree and move up the
tree to the new node. With each down step move down one node. For
the paths with h-steps define a second type of dashed edges and
add a dashed edge and a new node each time a h-step occurred.
Therefore, our paths correspond to some planar trees connected in
order by dashed edges from root nodes. An example of such trees is
depicted in Fig. \ref{fig:tree} which is equivalent to the path in
Fig. \ref{fig:dyck}.

\begin{figure}
\centering
\includegraphics[width = 0.2 \textwidth]{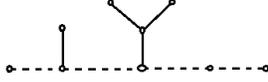}
\caption{The tree equivalent to the path in Fig. \ref{fig:dyck}.
The path starts from left, goes up when reaching a solid tree,
come down the tree and then again to the right.} \label{fig:tree}
\end{figure}

We use analytic combinatorics \cite{Flajolet} to show that
$I_{mn}$ satisfies the following recurrence:
\begin{equation}
\label{Irec} I_{mn} = I_{(m+1)(n-1)} - I_{(m+1)(n-2)}
\end{equation}
for $m \geqslant 0$ and $n\geqslant 2$. Associate variable $x$ to
each solid edge in the compound tree and variable $y$ to each
dashed edge. A planar tree is a node with a sequence of trees
attached to it. In fact trees are recursive combinatorial objects.
As an example consider sequences of object $x$ regardless of their
length:
\begin{equation}
\label{seq} {\mathcal S \textnormal {EQ}}(x) \mathrel{\mathop :} =
\{ \phi , x , xx , xxx , \cdots \}
\end{equation}
in which $\phi$ is the null sequence. These sequences correspond
to a generating function in which power of $x$ determines number
of $x$'s in the sequence:
\begin{equation}
\label{seqg} \textsc {Seq}(x) \mathrel{\mathop :} = \frac{1}{1-x}
= 1+x+x^2+x^3+\cdots
\end{equation}
In the same manner, sequences of $x$ and $y$'s are demonstrated
using the following generating function:
\begin{equation}
\label{seqmix} \textsc {Seq}(x,y) \mathrel{\mathop :} =
\frac{1}{1-x-y} = \sum_{n=0}^\infty (x+y)^n
\end{equation}
since each sequence with length $n$ corresponds to a binomial
expansion of $(x+y)^n$.

Planar trees are sequences of planar trees attached to a single
root node:
\begin{equation}
\label{tree} \mathcal T(x) \mathrel{\mathop :} = \epsilon \;
\mathcal S \textnormal {EQ} (\mathcal T(x))
\end{equation}
in which $\epsilon$ is the root node. Therefore, the generating
function of a planar tree is:
\begin{equation}
\label{treeg} \textsc{T} (x) = \frac{1}{1- \textsc{T}(x)}
\end{equation}
which gives:
\begin{equation}
T(x) = \frac{1}{2} \left( 1-\sqrt{1-4x} \right)
\end{equation}

Our paths are sequences of solid planar trees and dashed edges:
\begin{equation}
\label{w} \textnormal W(x,y) = \frac{1}{1-y-\frac{1}{2} \left(
1-\sqrt{1-4x} \right)}
\end{equation}
Number of paths with $m$ solid edges and $n$ dashed edges is the
coefficient of $x^m y^n$ in Taylor series expansion of $W(x,y)$
and therefore:
\begin{equation}
W(x,y) = \sum_{m,n=0}^\infty I_{mn} x^m y^n
\end{equation}
The recurrence in \eqref{Irec} can be shown on the explicit form
of $W(x,y)$ in \eqref{w}. The problem is that \eqref{Irec} is only
valid for $n \geqslant 2$. If we define number of paths with $-1$
dashed paths zero $I_{m(-1)}=0$, then \eqref{Irec} will be valid
for $n \geqslant 1$. Therefore, \eqref{Irec} is equivalent to:
\begin{equation}
\label{wrec} W - W_{:0} = \frac{y}{x} (W - W_{0:}) - \frac{y^2}{x}
(W - W_{0:})
\end{equation}
in which $W_{:0}= W(x,y=0)$ is the coefficient of $y^0$ in Taylor
series of $W(x,y)$ and $W_{0:} = W(x=0,y)$ is the coefficient of
$x^0$. \eqref{wrec} is easily verifiable. This gives \eqref{Irec}
and then \eqref{hrec}. $H(m,n)$ was defined in \eqref{hrec} as the
sum of coefficients of $\theta^n$ in $\boldsymbol u^* \boldsymbol
B^m \boldsymbol u$ and therefore is equal to the integral form in
Lemma \ref{lem:ubu_int} and we will have:
\begin{equation}
\int x^m f_n \textnormal d\mu_{\textnormal {sc}} = \int  x^{m+1}
f_{n-1} \textnormal d\mu_{\textnormal {sc}} - \int x^m f_{n-2}
\textnormal d\mu_{\textnormal {sc}}
\end{equation}
for all $m \geqslant 0$ and fixed $n \geqslant 2$. This completes
the proof of \eqref{frec} and Lemma \ref{lem:chebyshev}.

\end{document}